\newtheorem{lemma}{Lemma}
\newtheorem{remark}{Remark}
\newtheorem{proposition}{Proposition}
\newtheorem{theorem}{Theorem}
\newtheorem{conjecture}{Conjecture}
\newtheorem{corollary}{Corollary}
\newtheorem{question}{Question}
\newcommand{\EE}{{\mathbb{E}}}
\newcommand{\PP}{\mathbb{P}}
\newcommand{\R}{\mathbb {R}}
\newcommand{\cE}{\mathcal {E}}
\newcommand{\tmix}{\tau_{\textsc{mix}}}
\newcommand{\trel}{\tau_{\textsc{rel}}}
\newcommand{\tmls}{\tau_{\textsc{mls}}}
\newcommand{\tls}{\tau_{\textsc{ls}}}
\newcommand{\GG}{\mathbb {G}}
\newcommand{\am}{{\kappa_{\textsc{min}}}}
\newcommand{\ent}{{\mathrm{Ent}}}
\newcommand{\var}{{\mathrm{Var}}}
\newcommand{\xio}{\xi}
\title{A sharp log-Sobolev inequality for the multislice}
\author{Justin Salez\footnote{CEREMADE, CNRS, UMR 7534, Université Paris-Dauphine, PSL University, 75016 Paris, France}}
\date{}
\begin{document}
\maketitle

\begin{abstract}
We determine the log-Sobolev constant of the multi-urn Bernoulli-Laplace diffusion model with arbitrary parameters, up to a small universal multiplicative constant. Our result extends a classical estimate of Lee and Yau (1998) and confirms a conjecture of Filmus, O'Donnell and Wu (2018). Among other applications, we completely quantify the ``small-set expansion'' phenomenon on the multislice, and obtain sharp mixing-time estimates for the colored exclusion process on various graphs.% \medskip\\
%{\keywords{Log-Sobolev constant, random transpositions, colored exclusion process}}
\end{abstract}
\tableofcontents
%\ 
%\bigskip

\section{Introduction}

\subsection{The multislice} Consider a sequence  of positive integers $\kappa=(\kappa_1,\ldots,\kappa_L)$ of arbitrary length $L\ge 2$, and set
\begin{eqnarray}
\label{sum}
n & = & \kappa_1+\cdots+\kappa_L.
\end{eqnarray}
We will refer to the elements of $[L]=\{1,\ldots,L\}$ as \emph{colors}, and write $\Omega_\kappa$ for the set of $[L]-$valued sequences in which each color $\ell\in[L]$ appears exactly $\kappa_\ell$ times:
\begin{eqnarray*}
\Omega_\kappa & := & \left\{\omega=(\omega_1,\ldots,\omega_n)\in[L]^n\colon \sum_{i=1}^n{\bf 1}_{(\omega_i=\ell)} = \kappa_\ell \textrm{ for each }\ell\in[L] \right\}.
\end{eqnarray*}
This natural combinatorial set is sometimes called a \emph{multislice}. It  provides a canonical interpretation for the classical  multinomial coefficient:
\begin{eqnarray*}
|\Omega_\kappa| & = & {n \choose \kappa_1,\ldots,\kappa_L}.% \ = \ \frac{n!}{\kappa_1!\cdots\kappa_L!}.
\end{eqnarray*}

The symmetric group $\mathfrak S_n$ acts transitively on the multislice in the obvious way, by permuting coordinates. In particular,  transpositions induce a natural local random walk on $\Omega_\kappa$,  which consists in repeatedly picking two positions $1\le i<j\le n$ uniformly at random and replacing the current state $\omega\in\Omega_\kappa$ with the new state
\begin{eqnarray*}
\omega^{ij} & := & \left(\omega_1,\ldots,\omega_{i-1},\omega_j,\omega_{i+1},\ldots,\omega_{j-1},\omega_i,\omega_{j+1},\ldots,\omega_n\right).
\end{eqnarray*}
This Markov chain is known as the \emph{transposition walk on the multislice}, or \emph{multi-urn Bernoulli-Laplace diffusion model} with parameter $\kappa$. It can also be viewed as a random walk on the Schreier graph $\GG_\kappa=(\Omega_\kappa,E_\kappa)$, whose  edge-set is given by
\begin{eqnarray*}
E_\kappa & := & \left\{\{\omega,\omega'\}\subseteq \Omega_\kappa\colon \sum_{i=1}^n{\bf 1}_{(\omega_i\ne\omega_i')}=2\right\}.
\end{eqnarray*}
Thanks to the degree of freedom in the choice of the parameter $\kappa$, the model is rich enough to encompass several classical special cases, including:
\begin{enumerate}[(i)]
\item the random walk on the complete graph of order $n$, corresponding to $\kappa=(1,n-1)$;
\item the $k-$particle Bernoulli-Laplace diffusion on $n$ sites, corresponding to $\kappa=(k,n-k)$;
\item the transposition walk on $\mathfrak S_n$, corresponding to $\kappa=(1,\ldots,1)$.
\end{enumerate}
These fundamental examples have been studied in full detail, see in particular \cite{MR626813,MR871832,MR2240787,MR2094147,MR958246,MR2166362,MR1675008,2019arXiv190508514T}. In the general case, however, understanding the precise impact of the parameter $\kappa$ on the mixing properties of the graph  $\GG_\kappa$  was suggested as an open problem several times \cite{MR871832, MR964069,MR3990024}. Beyond the traditional ``mixing times of Markov chains'' perspective, this question was recently shown in \cite{2018arXiv180903546F,MR3990024,MR4079633} to have remarkable  applications to the theory of Boolean functions on the multislice, see Section \ref{sec:appli} below for more details. In particular, the present paper was motivated by a conjecture from \cite{2018arXiv180903546F} regarding the so-called \emph{log-Sobolev constant} of the multislice, whose definition will be recalled in the next section.

\begin{remark}[Coarsening]\label{rk:coarsening}There is an obvious partial ordering on our parameter space: say that $\kappa'$ is coarser than $\kappa$ if it can be obtained from $\kappa$ by repeatedly merging two entries into one. Note that this operation simply  amounts to  identifying certain  colors, so that the transposition walk on $\Omega_{\kappa'}$ is a \emph{projection} of the one on $\Omega_{\kappa}$. In particular, the mixing behavior of the chain can only improve as $\kappa$ becomes coarser, with the case $\kappa=(1,\ldots,1)$ of example (iii) being the worst. Our main result will precisely quantify this qualitative statement. 
\end{remark}

\subsection{Functional inequalities} 
 One of the most powerful ways to quantify the mixing properties of a Markov chain consists in establishing appropriate functional inequalities for the underlying {Dirichlet form}. We shall here only recall the relevant definitions, and refer to the seminal papers \cite{MR1410112,MR2120475} or the excellent survey \cite{MR2341319} for a detailed account. 
 We start by turning the multislice $\Omega_\kappa$ into a probability space by equipping it with the uniform distribution. In particular, we regard functions  $f\colon\Omega_\kappa\to\R$ as random variables, and write $\EE_\kappa[f]$ for the corresponding expectation:
\begin{eqnarray*}
\EE_\kappa[f] & := & \frac{1}{|\Omega_\kappa|}\sum_{\omega\in\Omega_\kappa}f(\omega).
\end{eqnarray*}
 The \emph{Dirichlet form} of our chain is defined for every $f,g\colon\Omega_\kappa\to\R$ by 
\begin{eqnarray}
\label{def:dir}
\cE_\kappa\left(f,g\right) & := & \frac{1}{2n}\sum_{1\le i<j\le n}\EE_\kappa\left[\left(\nabla^{ij}f\right)\left(\nabla^{ij}g\right)\right],
\end{eqnarray}
where $(\nabla^{ij}f)(\omega):=f(\omega^{ij})-f(\omega)$ is the discrete gradient.

\begin{remark}[Scaling] We have here chosen to work under the natural continuous-time scaling where each of the $n\choose 2$ possible transpositions occurs at rate $1/n$, so that a coordinate gets refreshed at rate $1$. We emphasize that this is a matter of convention only: switching to discrete time amounts to nothing more that multiplying the above Dirichlet form by $2/(n-1)$. 
\end{remark}
Since $\cE_\kappa(f,f)$ measures the \emph{local} variation of the observable $f$ along a typical transition of the chain, it is natural to compare it with the \emph{variance} $\var_\kappa(f)$ or the \emph{entropy} $\ent_\kappa(f)$, which quantify the \emph{global} variation of $f$ across the whole state space:
\begin{eqnarray*}
\var_\kappa(f) & := & \EE_\kappa[f^2]-\EE^2_\kappa[f],\\
\ent_\kappa(f) & := & \EE_\kappa\left[f\log f\right]-\EE_\kappa[f]\log\EE_\kappa[f]. 
\end{eqnarray*}
All ${\rm logs}$ appearing in this paper are {natural} logarithms, and the last definition is of course restricted to non-negative functions, with the standard convention $0\log 0=0$. With this notation in hands, the three classical functional inequalities read  as follows:
\begin{itemize}
\item The \emph{Poincaré inequality} holds with constant $\tau$  if
\begin{eqnarray}
\label{PI}
{\var}_\kappa(f) & \le & \tau\, \cE_\kappa(f,f),\quad \textrm{for all }f\colon\Omega_\kappa\to\R.
\end{eqnarray}
\item The  \emph{modified log-Sobolev inequality} holds with constant $\tau$ if
\begin{eqnarray}
\label{MLSI}
\ent_\kappa(f) & \le & \tau\, \cE_\kappa\left(f,\log {f}\right),\quad \textrm{for all }f\colon\Omega_\kappa\to\R_+.
\end{eqnarray}
\item The \emph{log-Sobolev inequality}  holds with constant $\tau$ if
\begin{eqnarray}
  \label{LSI}
\ent_\kappa(f) & \le & \tau\, \cE_\kappa\left(\sqrt{f},\sqrt{f}\right),\quad \textrm{for all }f\colon\Omega_\kappa\to\R_+.
\end{eqnarray}
\end{itemize}

The optimal values of $\tau$ in these functional inequalities are respectively known as the (inverse)  \emph{Poincaré}, \emph{modified log-Sobolev}, and \emph{log-Sobolev} constants of the chain. They will here be denoted by $\trel(\kappa),\tmls(\kappa)$ and $\tls(\kappa)$. 
These fundamental parameters provide powerful controls on the underlying Markov semi-group,  and have tight connections to  mixing times, concentration of measure, small-set expansion, and hypercontractivity. We again refer to  \cite{MR1410112,MR2120475,MR2341319} for a detailed account, and to  \cite{MR3773799} for new characterizations. Let us simply note that the  statements (\ref{PI}), (\ref{MLSI}),  (\ref{LSI}) are essentially increasing in strength, in the sense that 
\begin{eqnarray}
\label{order}
2\trel(\kappa) \ \le \ 4\tmls(\kappa) \ \le \ \tls(\kappa).
\end{eqnarray}
Perhaps surprisingly, the first two quantities turn out to be too rough to capture the precise impact of $\kappa$ on the mixing properties of the multislice $\Omega_\kappa$. Specifically, we note the following dramatic insensitivity result, see Section \ref{sec:colored} for details. 
\begin{lemma}[Insensitivity of the Poincaré and modified log-Sobolev constants] \label{pr:insensitive} We have
 \begin{eqnarray*}
 \trel(\kappa) \ = \ 1 & \textrm{ and }
& \tmls(\kappa) \ \in \ \left[ \frac{1}{2},2\right],
 \end{eqnarray*}
 regardless of the choice of the parameter $\kappa$. 
\end{lemma}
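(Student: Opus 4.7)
The plan is to reduce both statements to the extremal case $\kappa=(1,\ldots,1)$, i.e.\ the random transposition walk on $\mathfrak{S}_n$, by exploiting the quotient structure of Remark \ref{rk:coarsening}. Concretely, I fix a reference configuration $\omega_0\in\Omega_\kappa$ and consider the projection $\pi:\mathfrak{S}_n\to\Omega_\kappa$, $\sigma\mapsto\omega_0\circ\sigma$. Its fibers are right cosets of the color-stabilizer $G_\kappa:=\mathfrak{S}_{\kappa_1}\times\cdots\times\mathfrak{S}_{\kappa_L}$, and under $\pi$ the transposition action on $\Omega_\kappa$ corresponds to right multiplication by a transposition on $\mathfrak{S}_n$. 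A direct check then shows that for any $f:\Omega_\kappa\to\R$, the $G_\kappa$-invariant lift $\tilde f:=f\circ\pi$ has exactly the same mean, variance, entropy and Dirichlet form as $f$. This immediately gives
\begin{equation*}
\trel(\kappa)\ \le\ \trel(\mathfrak{S}_n),\qquad \tmls(\kappa)\ \le\ \tmls(\mathfrak{S}_n).
\end{equation*}

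Under our continuous-time scaling, the Fourier computation of Diaconis and Shahshahani identifies the spectral gap of the transposition walk on $\mathfrak{S}_n$ as $1$, attained on the standard representation; combined with the quotient bound this gives $\trel(\kappa)\le 1$. For the matching lower bound, I would coarsen $\kappa$ to two colors (merging color $1$ versus the rest) and test the Poincaré inequality against $f(\omega):=\mathbf{1}_{\omega_1=1}$. A short explicit computation yields $\var_\kappa(f)=\cE_\kappa(f,f)=\kappa_1(n-\kappa_1)/n^2$ (nonzero contributions arise only for pairs $\{1,j\}$, and on each one the squared gradient has expectation $2\kappa_1(n-\kappa_1)/[n(n-1)]$), forcing $\trel(\kappa)\ge 1$ and settling the equality.

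The lower bound $\tmls(\kappa)\ge 1/2$ is then an immediate consequence of the universal ordering (\ref{order}) together with $\trel(\kappa)=1$. The upper bound $\tmls(\kappa)\le 2$ reduces, via the quotient inequality above, to the estimate $\tmls(\mathfrak{S}_n)\le 2$ for the random transposition walk. This is a genuine MLSI-type statement about random transpositions: it is strictly stronger than anything that (\ref{order}) can provide (since $\tls(\mathfrak{S}_n)$ grows with $n$, so the bound $\tmls\le\tls/4$ is useless), and I would establish it by an entropy-decomposition argument along a filtration that isolates the colors one at a time, using the dimension-free MLSI for the two-color Bernoulli--Laplace chain as the base case and checking that the decomposition does not lose more than a constant factor at each refinement step.

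The main obstacle is precisely this last step. The spectral-gap computation, the two-color test function, and the MLSI lower bound from (\ref{order}) are routine or classical; by contrast, the dimension-free upper bound $\tmls(\mathfrak{S}_n)\le 2$ requires an honest entropy-dissipation argument for random transpositions, not reducible to variance-type estimates. Once that dimension-free MLSI is in hand, the quotient inequality delivered by $\pi$ instantly propagates it to every parameter $\kappa$, completing the proof.
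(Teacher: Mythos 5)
Your overall architecture matches the paper's: reduce everything to the finest parameter $\kappa=(1,\ldots,1)$ via the quotient map (your $\pi$ is exactly the paper's coarsening map $\Psi$ of Lemma \ref{lm:coarsening}, which preserves means, entropies and Dirichlet forms), obtain $\tmls(\kappa)\ge\frac12$ from the ordering (\ref{order}) once $\trel(\kappa)=1$ is known, and push the MLSI upper bound down from $\mathfrak{S}_n$ to $\Omega_\kappa$. Your treatment of $\trel(\kappa)=1$ (Diaconis--Shahshahani for the upper bound, the single-coordinate indicator $f={\bf 1}_{(\omega_1=1)}$ for the lower bound, whose variance and Dirichlet form you compute correctly) is a perfectly sound alternative to the paper's one-line appeal to Theorem \ref{th:aldous}.

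The gap is in the last step. The paper does \emph{not} prove the dimension-free modified log-Sobolev inequality for random transpositions: it cites it as a known result of Goel \cite[Corollary~3.1]{MR2094147} (see also Gao--Quastel and Bobkov--Tetali), namely $\tmls(1,\ldots,1)\le 2$ in the present normalization, and the coarsening inequality then finishes the proof. You instead propose to \emph{establish} this bound by an entropy decomposition over a color filtration with the two-color Bernoulli--Laplace MLSI as base case, ``checking that the decomposition does not lose more than a constant factor at each refinement step.'' That sentence is where the entire difficulty of the statement lives, and the sketch as given is not convincing: the analogous induction for the (non-modified) log-Sobolev constant provably \emph{must} lose something at each step, since $\tls(1,\ldots,1)\asymp\log n$ while $\tls(k,n-k)$ is bounded for balanced $k$; so ``no loss per step'' is a property of the modified functional that has to be argued, not assumed. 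Unless you carry out that argument (which would essentially reprove Goel's theorem), you should replace this paragraph by a citation of the known dimension-free MLSI for random transpositions; with that substitution your proof is complete and equivalent to the paper's.
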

In contrast, the much finer \emph{log-Sobolev constant} $\tls(\kappa)$ happens to depend on $\kappa$ in a non-trivial way, and understanding the exact nature of this dependency is precisely the aim of the present paper. Before we state our results, let us give a brief account on this general problem and its broad range of applications.

\subsection{Related works}

As already mentioned, the multi-urn Bernoulli-Laplace model encompasses various  well-studied special cases. The simplest one is the random walk on the complete $n-$vertex graph, obtained with $\kappa=(1,n-1)$. This example belongs to the short list of chains whose log-Sobolev constant is known exactly, see the seminal paper \cite{MR1410112} by Diaconis and Saloff-Coste. 
\begin{theorem}[Random walk on the complete graph, see Theorem A.1 in \cite{MR1410112}] \label{th:RW}
\begin{eqnarray*}
\tls(1,n-1) & = & \left\{
\begin{array}{ll}
\frac{n\log(n-1)}{n-2} & \textrm{if }n\ge 3\\
2 & \textrm{if }n=2.
\end{array}
\right.
\end{eqnarray*}
\end{theorem}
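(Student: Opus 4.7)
Under the bijection $\Omega_{(1,n-1)}\cong [n]$ that records the position of the unique color-$1$ entry, the uniform measure on $\Omega_\kappa$ becomes the uniform measure on $[n]$, and a direct expansion of (\ref{def:dir}) collapses to the clean identity
$$
\cE_\kappa(f,f) \ = \ \frac{1}{n^2}\sum_{1\le i<j\le n}(f(i)-f(j))^2 \ = \ \var_\kappa(f).
$$
Setting $g=\sqrt{f}$, the log-Sobolev inequality (\ref{LSI}) thus reduces to the purely analytic statement $\ent_\kappa(g^2)\le \tau\,\var_\kappa(g)$ for all $g\colon[n]\to\R_+$; computing $\tls(1,n-1)$ is a scale-invariant variational problem on $\R_+^n$.

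\textbf{Lower bound.} The bound $\tls(1,n-1)\ge \tfrac{n\log(n-1)}{n-2}$ is obtained by plugging in the two-level candidate $g(1)=n-1$, $g(i)=1$ for $i\ge 2$: a short algebraic computation gives
$$
\var_\kappa(g) \ = \ \frac{(n-1)(n-2)^2}{n^2}, \qquad \ent_\kappa(g^2) \ = \ \frac{(n-1)(n-2)\log(n-1)}{n},
$$
whose ratio equals exactly $\frac{n\log(n-1)}{n-2}$. The edge case $n=2$ is the two-point random walk, and $\tls=2$ is Gross's classical calculation.

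\textbf{Matching upper bound.} For the converse, one first shows by compactness on the sphere $\sum_j g(j)^2=n$ that a maximizer $g^\star$ of $\ent_\kappa(g^2)/\var_\kappa(g)$ exists. Differentiating the Lagrangian and eliminating the multipliers reveals that all values $x=g^\star(j)$ obey a common relation
$$
2x\log x \ = \ \delta\,x + \gamma
$$
for constants $\delta,\gamma\in\R$ independent of $j$. Since $x\mapsto 2x\log x$ is strictly convex on $(0,\infty)$, this admits at most two positive solutions, so $g^\star=a\,\mathbf{1}_A+b\,\mathbf{1}_{[n]\setminus A}$ with $|A|=k\in\{1,\ldots,n-1\}$. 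Via the tidy identity $\var_\kappa(g)=\frac{k(n-k)}{n^2}(a-b)^2$, a short computation identifies the unique nontrivial interior critical point as $a/b=(n-k)/k$, with corresponding ratio $R(k)=\frac{n\log((n-k)/k)}{n-2k}$.

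\textbf{Main obstacle.} The genuinely delicate step is to show that $R$ is strictly decreasing on $\{1,\ldots,\lfloor n/2\rfloor\}$, forcing the maximum to occur at $k=1$. Writing $t=(n-k)/k>1$, one has $R=\frac{(t+1)\log t}{t-1}$, and a direct differentiation gives $R'(t)\ge 0\iff t-t^{-1}\ge 2\log t$; the latter holds on $[1,\infty)$ because $\frac{d}{dt}\bigl(t-t^{-1}-2\log t\bigr)=(t-1)^2/t^2\ge 0$ with equality at $t=1$. Combined with the $k\leftrightarrow n-k$ symmetry and an easy comparison to the boundary regimes $a\to 0$ and $a\to\infty$ (which give, respectively, the strictly smaller values $\tfrac{n}{k}\log\tfrac{n}{n-k}$ and $\tfrac{n\log(n/k)}{n-k}$ at $k=1$), this pinpoints the global maximum at $k=1$ (and $k=n-1$) with value $\frac{n\log(n-1)}{n-2}$, closing the proof.
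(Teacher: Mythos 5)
The paper does not actually prove Theorem \ref{th:RW}: it imports it wholesale from Diaconis and Saloff-Coste (Theorem A.1 of \cite{MR1410112}), so there is no internal proof to compare against. What you have written is essentially a reconstruction of the Diaconis--Saloff-Coste argument, and the substantive computations are correct: the identification $\cE_{(1,n-1)}(f,f)=\var(f)$ under the bijection $\Omega_{(1,n-1)}\cong[n]$ checks out, the lower-bound test function gives exactly $\var(g)=\tfrac{(n-1)(n-2)^2}{n^2}$ and $\ent(g^2)=\tfrac{(n-1)(n-2)\log(n-1)}{n}$, the stationarity condition $2x\log x=\delta x+\gamma$ is the right one, the value $a/b=(n-k)/k$ does yield the ratio $R(k)=\tfrac{n\log((n-k)/k)}{n-2k}=\tfrac{(t+1)\log t}{t-1}$, and the monotonicity argument via $\tfrac{d}{dt}\bigl(t-t^{-1}-2\log t\bigr)=(t-1)^2/t^2$ is clean and correct.

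There is one genuine (though repairable) gap in the reduction to two-valued extremizers. The equation $2x\log x=\delta x+\gamma$ is the \emph{interior} first-order condition; it only holds at coordinates where the constraint $g_j\ge 0$ is inactive. A maximizer with some $g_j=0$ would a priori take values in $\{0,a,b\}$ -- three levels, not two -- and your ``boundary regimes $a\to0$, $a\to\infty$'' discussion lives entirely inside the two-valued family, so it does not cover this. The fix is short: at a coordinate with $g_j=0$ one has $\partial_{g_j}\ent(g^2)=0$ while $\partial_{g_j}\var(g)=-2\EE[g]/n<0$, so increasing $g_j$ to a small $\varepsilon$ changes the entropy only to order $o(\varepsilon)$ while strictly decreasing the variance to first order; rescaling back to the constraint sphere then strictly increases $\ent(g^2)$, contradicting maximality. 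Hence the maximizer is strictly positive and your two-level reduction applies. Two smaller points you should tighten: (i) the ``compactness'' step must also rule out that the supremum is only approached near the constant function, where the ratio degenerates to $2\trel=2$ -- this is fine precisely because the claimed value exceeds $2$ for $n\ge3$, but it deserves a sentence; (ii) the boundary values $\tfrac{n}{k}\log\tfrac{n}{n-k}$ and $\tfrac{n\log(n/k)}{n-k}$ should be compared with $R(1)$ for \emph{all} $k$, not only $k=1$ (this is easy -- they are the indicator-function ratios and are maximized at $k=1$ resp.\ dominated by $\tfrac{n\log n}{n-1}<\tfrac{n\log(n-1)}{n-2}$ -- but as written the claim is only checked at $k=1$).
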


A much richer example is the famous ``Random Transposition'' walk on the symmetric group ${\mathfrak S}_n$, which corresponds to the choice $\kappa=(1,\ldots,1)$. A sharp estimate on  the log-Sobolev constant of this fundamental chain can be deduced from the detailed representation-theoretic analysis conducted by Diaconis and Shahshahani in their pioneering work \cite{MR626813}. 
\begin{theorem}[Random transposition on the symmetric group, see \cite{MR626813}]\label{th:RT}
\begin{eqnarray*}
\log n\ \le \ \tls(\underbrace{1,\ldots,1}_{n\textrm{ times}}) & \le & 4\log n.
\end{eqnarray*}
\end{theorem}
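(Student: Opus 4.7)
The proof splits naturally into a lower bound and an upper bound, each exploiting a different feature of the special multislice $\Omega_\kappa = \mathfrak S_n$ arising from $\kappa = (1,\ldots,1)$.

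For the lower bound $\tls \ge \log n$, I would test the inequality (\ref{LSI}) against the indicator $f := \IND_A$ of the set $A := \{\omega \in \mathfrak S_n : \omega_1 = 1\}$, which has uniform mass $1/n$. Since $\IND_A$ is $\{0,1\}$-valued, its entropy reduces to $\ent_\kappa(\IND_A) = -(1/n)\log(1/n) = (\log n)/n$. On the Dirichlet side, only the $n-1$ transpositions $(1,j)$ with $j\ge 2$ can change the value of $\IND_A$, and for each such pair exactly $2(n-1)!$ configurations $\omega$ have $\omega_1 = 1$ or $\omega_j = 1$; therefore
\[\cE_\kappa(\IND_A, \IND_A) \;=\; \frac{1}{2n}\cdot(n-1)\cdot\frac{2(n-1)!}{n!} \;=\; \frac{n-1}{n^2}.\]
Dividing the two quantities yields $\tls \ge n\log(n)/(n-1) \ge \log n$.

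For the upper bound $\tls \le 4 \log n$, I would invoke the representation-theoretic diagonalization of the random transposition walk due to Diaconis and Shahshahani \cite{MR626813}. Writing $\cE(f,f) = \frac{n-1}{2}\langle (I-Q)f, f\rangle$ where $Q = \binom{n}{2}^{-1}\sum_{i<j}P_{ij}$ is the Markov operator associated to the conjugacy class of transpositions, the operator $Q$ diagonalizes over the isotypic components of the irreducible representations of $\mathfrak S_n$, with eigenvalue on the component indexed by a partition $\lambda \vdash n$ equal to the character ratio $r_\lambda := \chi_\lambda(T_{12})/d_\lambda$. The two key quantitative inputs from \cite{MR626813} are the explicit formula $r_\lambda = (n(n-1))^{-1}\sum_i\lambda_i(\lambda_i - 2i + 1)$, which entails the spectral decay $1 - r_\lambda \ge 2(n-\lambda_1)/(n(n-1))$ for every non-trivial $\lambda$, and the dimension bound $\sum_{\lambda : n - \lambda_1 = k}d_\lambda^2 \le n^{2k}/(k!)^2$. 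Combined in a Bonami--Beckner-style hypercontractive argument (or equivalently through Nash's inequality), these inputs deliver an estimate of the form $\|P_t f\|_4 \le \|f\|_2$ at time $t = \log n$, which by Gross's equivalence between hypercontractivity and the log-Sobolev inequality translates into $\tls \le 4 \log n$.

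The main technical obstacle is calibrating the constants in the hypercontractivity-to-log-Sobolev reduction so that the upper bound matches the lower bound up to the prescribed factor $4$. An alternative route is a Lee--Yau-style induction on $n$ that decomposes $\ent(f) = \EE[\ent(f | \omega_n)] + \ent(\EE[f | \omega_n])$, applies the induction hypothesis to the first term (which corresponds to the conditional random transposition walk on $\mathfrak S_{n-1}$ acting on $[n]\setminus\{\omega_n\}$), applies Theorem~\ref{th:RW} to the second term, and uses the projection inequality $\cE_{K_n}(\sqrt{g},\sqrt{g}) \le \frac{1}{2n}\sum_{i<n}\EE[(\nabla^{in}\sqrt{f})^2]$ (provable by a natural coupling) to match Dirichlet forms; the subtle point there is to prevent the accumulation of constants across the induction steps, which is what pins down the universal prefactor.
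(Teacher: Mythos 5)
The paper does not actually prove Theorem~\ref{th:RT}: it is quoted as a known consequence of the representation-theoretic analysis of \cite{MR626813} (combined with the log-Sobolev machinery of \cite{MR1410112}), and it is never invoked in the proof of the main theorem, whose base case is Theorem~\ref{th:BL}. So your proposal must be judged on its own. Your lower bound is complete and correct: with $A=\{\omega_1=1\}$ one has $\sqrt{\IND_A}=\IND_A$, $\ent_\kappa(\IND_A)=(\log n)/n$ and $\cE_\kappa(\IND_A,\IND_A)=(n-1)/n^2$, whence $\tls\ge n\log n/(n-1)\ge\log n$; this is exactly the test-set mechanism the paper uses for its general lower bound (Lemma~\ref{lm:vs} applied to the set (\ref{def:A})), specialized to $\am=1$.

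The upper bound, however, is a roadmap rather than a proof, and you concede the decisive point yourself (``calibrating the constants \ldots is the main technical obstacle''). Three concrete issues. First, a single estimate $\NRM{P_{t_0}}_{2\to4}\le1$ does not ``by Gross's equivalence'' yield $\tls\le4t_0$: Gross's theorem requires $\NRM{P_t}_{2\to q(t)}\le1$ for the \emph{whole family} $q(t)=1+e^{4t/\tls}$, and converting one fixed $(2,4)$ bound into a log-Sobolev inequality needs the separate single-time converse of Diaconis and Saloff-Coste, whose conclusion also involves the spectral gap and whose bookkeeping is precisely where the factor $4$ must be extracted --- this is never done. Second, the spectral input as you state it is off by a factor of $n$: for $\lambda=(n-1,1)$ the true value is $1-r_\lambda=2/(n-1)$ while your bound gives only $2/(n(n-1))$; the correct Diaconis--Shahshahani estimate is of order $2(n-\lambda_1)/n$, and with your weaker version the hypercontractive sum $\sum_\lambda d_\lambda^2 e^{-ct(1-r_\lambda)}$ only closes at $t\asymp n\log n$ instead of $\log n$, so the argument as written fails outright. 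Third, the multiplicity bound $\sum_{\lambda:\,n-\lambda_1=k}d_\lambda^2\le n^{2k}/(k!)^2$ is false already at $k=2$ (the partitions $(n-2,2)$ and $(n-2,1,1)$ contribute about $n^4/2$); the correct bound is $\binom{n}{k}^2k!\le n^{2k}/k!$. The alternative Lee--Yau induction you sketch at the end is closer in spirit to what the paper does for general $\kappa$, but there too the entire difficulty is preventing the constants from accumulating across the $n$ induction steps, and that is exactly the part left unaddressed.
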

Several years later, Lee and Yau found a more direct proof, based on what is now known as the ``martingale method"  \cite{MR1675008}. This approach also allowed them to determine the order of magnitude of the log-Sobolev constant of the $k-$particle Bernoulli-Laplace diffusion on $n$ sites, thereby resolving an open problem raised by Diaconis and Saloff-Coste in \cite{MR1410112}. 
\begin{theorem}[Two-urn Bernoulli-Laplace diffusion model, see Theorem 5 in \cite{MR1675008}]\label{th:BL} There exists a universal constant $\varepsilon>0$ such that for all $0< k< n$,
 \begin{eqnarray*}
\varepsilon \log \left(\frac{n^2}{k(n-k)}\right)\ \le \ \tls\left(k,n-k\right) & \le & \frac{2}{\log 2}  \log \left(\frac{n^2}{k(n-k)}\right).
\end{eqnarray*}
\end{theorem}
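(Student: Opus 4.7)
I would test the log-Sobolev inequality against the indicator function $f := \1_{\{\omega_1 = 1\}}$. Because $f$ is $\{0,1\}$-valued, the entropy reduces to $\ent_\kappa(f) = -(k/n)\log(k/n)$. Only the $n-1$ transpositions involving position $1$ yield a nonzero gradient, and a short symmetry calculation gives $\cE_\kappa(\sqrt f,\sqrt f) = \cE_\kappa(f,f) = k(n-k)/n^2$. Substituting into (\ref{LSI}) produces
\[ \tls(k,n-k) \;\ge\; \frac{n\log(n/k)}{n-k}, \]
and the symmetric choice $\1_{\{\omega_1 = 2\}}$ yields the analogous bound with $k$ and $n-k$ exchanged. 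Taking the larger of the two and using $\log(n^2/(k(n-k))) \le 2\log(n/\min(k,n-k))$ establishes the claimed left inequality with $\varepsilon = 1/2$.

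\textbf{Upper bound: the martingale method.} The strategy is induction on $n$, following Lee and Yau. The base case $k = n-k = 1$ reduces to the two-point log-Sobolev inequality (Theorem \ref{th:RW} with $n=2$). For the inductive step I would condition on the last coordinate $\omega_n$ and invoke the tensorization identity
\[ \ent_\kappa(f) \;=\; \EE_\kappa\!\big[\ent_\kappa(f\mid\omega_n)\big] \;+\; \ent\!\big(\EE_\kappa[f\mid\omega_n]\big). \]
For each value of $\omega_n$, the conditional distribution makes $f(\cdot,\omega_n)$ a function on a smaller slice $\Omega_{(k-1,n-k)}$ or $\Omega_{(k,n-k-1)}$, so the induction hypothesis controls the inner entropy by the corresponding smaller $\tls$ times the corresponding Dirichlet form. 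The outer entropy lives on the two-point space $\{1,2\}$ equipped with the Bernoulli$(k/n)$ measure, and is handled by the explicit two-point log-Sobolev constant for this measure, which is of order $\log(n/\min(k,n-k))$ and accounts exactly for the ``boundary'' contribution one wants.

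\textbf{Assembling the recursion; main obstacle.} To close the induction I would compare the smaller Dirichlet forms that appear on the right-hand side with the full $\cE_\kappa(\sqrt f,\sqrt f)$. The conditional Dirichlet forms involve only transpositions among the first $n-1$ coordinates — missing a fraction of order $1/n$ of the total edges — and the outer Dirichlet form on the two-point marginal can likewise be bounded in terms of $\cE_\kappa$. After bookkeeping, one obtains a recursion expressing $\tls(k,n-k)$ in terms of the $\tls$ of slices with one parameter reduced by one, plus an additive perturbation of size $O(\log(n^2/(k(n-k))))$; iterating down to the base case along a monotone path in the parameter grid yields the claimed upper bound, with the specific constant $2/\log 2$ emerging from a careful tracking of the base case. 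The principal difficulty is the tightness of this recursion: naive Dirichlet-form comparisons lose a multiplicative factor strictly greater than $1$ per step, and iterating such a loss over order $n$ steps would destroy the logarithmic bound. Avoiding this is precisely where Lee and Yau introduce a refined Efron--Stein-type splitting that replaces the crude comparison with a nearly lossless one; reproducing that sharp estimate, together with using the correct two-point log-Sobolev constant, is the technical heart of the proof.
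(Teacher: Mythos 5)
First, a point of reference: the paper does not prove Theorem \ref{th:BL} at all --- it imports it verbatim from Lee and Yau \cite{MR1675008} and uses it as a black box in Lemma \ref{lm:L2}, so the only meaningful comparison for your upper bound is with the original martingale-method proof. Your \emph{lower} bound is complete and correct: with $f=\1_{\{\omega_1=1\}}$ one indeed gets $\ent_\kappa(f)=\frac{k}{n}\log\frac{n}{k}$ and $\cE_\kappa(\sqrt f,\sqrt f)=\frac{k(n-k)}{n^2}$, hence $\tls(k,n-k)\ge\frac{n}{n-k}\log\frac nk\ge\log\frac nk\ge\frac12\log\frac{n^2}{k(n-k)}$ when $k=\min(k,n-k)$, so $\varepsilon=\frac12$ works. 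This is essentially the same restriction-to-indicators computation the paper performs (via Lemma \ref{lm:vs} and the set $A$ of (\ref{def:A})) to get the lower bound in Theorem \ref{th:main}, so that half is fine.

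The upper bound, however, contains a genuine gap: you outline the Lee--Yau induction but explicitly decline to carry out the step that you yourself call the technical heart, and the recursion as you state it does not imply the theorem. Concretely, after conditioning on $\omega_n$ you must (a) dominate the averaged conditional Dirichlet forms on $\Omega_{(k-1,n-k)}$ and $\Omega_{(k,n-k-1)}$ (which involve only transpositions inside $[n-1]$, with normalization $1/(n-1)$ rather than $1/n$) by $\cE_\kappa(\sqrt f,\sqrt f)$, and (b) dominate the two-point Dirichlet form of the marginal $\EE[f\mid\omega_n]$ by $\cE_\kappa(\sqrt f,\sqrt f)$, which already requires a Jensen argument for the convex map $(u,v)\mapsto(\sqrt u-\sqrt v)^2$ of exactly the kind the paper deploys in proving (\ref{pr:second}). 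More seriously, a recursion of the form you describe --- ``$\tls$ of a slice with one parameter reduced by one, plus an additive perturbation of size $O(\log\frac{n^2}{k(n-k)})$ per step'' --- is iterated over $\Theta(n)$ steps to reach the base case, so the additive terms alone sum to order $n\log\frac{n^2}{k(n-k)}$, and any per-step multiplicative loss $\frac{n}{n-1}$ compounds to an extra factor of order $n$. Either effect destroys the logarithmic bound. The whole content of Lee and Yau's argument is the sharp splitting that makes the boundary contributions sum to a single logarithm and keeps the comparison lossless; without reproducing it, your outline establishes nothing beyond the (known, weaker) bounds obtainable by crude comparison, and the constant $\frac{2}{\log 2}$ certainly cannot be extracted from it. As written, you have proved the lower bound but only described, not proved, the upper one.
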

The implications of Theorems  \ref{th:RT}-\ref{th:BL} are too numerous to be all cited. One particularly active direction consists in ``transferring'' these log-Sobolev estimates to models with less symmetry in order to obtain sharp mixing-time bounds, via the celebrated ``comparison method'' introduced by Diaconis and Saloff-Coste \cite{MR1245303,MR1233621}. Recent successful examples include the \emph{interchange process} on arbitrary graphs \cite{2018arXiv181110537A}, or the \emph{exclusion process} on high-dimensional product graphs \cite{2019arXiv190502146H}. Beyond Markov chains, the well-known connection between log-Sobolev inequalities and \emph{hypercontractivity} provides another extremely fertile ground for applications  in discrete analysis and computer science. We refer to the book  \cite[Chapters 9 \& 10]{MR3443800} for details, and  to the recent work \cite{2018arXiv180903546F} for an impressive  list of references from combinatorics, computational learning, property testing or Boolean functions, where Theorems  \ref{th:RT}-\ref{th:BL} played a crucial role. Motivated by these applications, {Filmus}, {O'Donnell} and {Wu} \cite{2018arXiv180903546F} initiated the  study of the log-Sobolev constant $\tls(\kappa)$ for general $\kappa$. Their main result is as follows. 
\begin{theorem}[General bound, see Theorem 1 in \cite{2018arXiv180903546F}]\label{th:F}For any choice of the parameter $\kappa$,
 \begin{eqnarray*}
\tls\left(\kappa\right) & \le & \frac{2}{\log 2} \sum_{\ell=1}^L \log \left(\frac{4n}{\kappa_\ell}\right).
\end{eqnarray*}
\end{theorem}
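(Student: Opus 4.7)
The plan is to proceed by induction on the number of colors $L\ge 2$. For the base case $L=2$, the target bound $\frac{2}{\log 2}\log(16n^2/(k(n-k)))$ is weaker than the tight Lee--Yau estimate $\frac{2}{\log 2}\log(n^2/(k(n-k)))$ of Theorem~\ref{th:BL}, so the base case already holds with slack.

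For the inductive step, I would peel off a color of smallest multiplicity---by relabelling, assume $\kappa_1=\min_\ell \kappa_\ell$, so $\kappa_1\le n/L\le n/2$. Let $\xi\in\{0,1\}^n$ denote the indicator of color $1$ in $\omega$; then $\xi$ is uniform on the two-slice $\Omega_{(\kappa_1,n-\kappa_1)}$, while conditionally on $\xi$ the restriction of $\omega$ to the positions carrying $\xi=0$ is uniform on the $(L-1)$-color multislice $\Omega_{\kappa'}$ with $\kappa'=(\kappa_2,\ldots,\kappa_L)$. Split the Dirichlet form as $\cE_\kappa=\cE^{(1)}+\cE^{(\bar 1)}$ according to whether the transposed pair changes $\xi$, and split the entropy by the chain rule
\[
\ent_\kappa(f) \;=\; \ent\left(\EE[f|\xi]\right) + \EE\left[\ent(f|\xi)\right].
\]
Two comparison lemmas then transport each piece back to $\cE_\kappa$. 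First, since $\omega\mapsto\omega^{ij}$ is a measure-preserving bijection from the conditional law given $\xi$ to the conditional law given $\xi^{ij}$, the Cauchy--Schwarz inequality $\bigl(\sqrt{\EE X^2}-\sqrt{\EE Y^2}\bigr)^2\le\EE[(X-Y)^2]$ yields
$\cE_{(\kappa_1,n-\kappa_1)}\bigl(\sqrt{\EE[f|\xi]},\sqrt{\EE[f|\xi]}\bigr)\le \cE^{(1)}(\sqrt{f},\sqrt{f})$.
Second, a direct counting of transpositions acting inside the conditional multislice gives
$\EE\bigl[\cE_{\kappa'}(\sqrt{f},\sqrt{f}|\xi)\bigr] = \frac{n}{n-\kappa_1}\,\cE^{(\bar 1)}(\sqrt{f},\sqrt{f})$.

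Combining the first lemma with Theorem~\ref{th:BL} applied to $\EE[f|\xi]$ controls the outer entropy by $\tau_1\,\cE^{(1)}(\sqrt{f},\sqrt{f})$, with $\tau_1=\frac{2}{\log 2}\log\frac{n^2}{\kappa_1(n-\kappa_1)}$; combining the second with the induction hypothesis applied conditionally controls the inner piece by $\frac{n}{n-\kappa_1}\,\tau'\,\cE^{(\bar 1)}(\sqrt{f},\sqrt{f})$, with $\tau'=\frac{2}{\log 2}\sum_{\ell\ne 1}\log\frac{4(n-\kappa_1)}{\kappa_\ell}$. The main obstacle---and what forces us to peel the smallest color---is showing that both prefactors are dominated by $\tau:=\frac{2}{\log 2}\sum_{\ell=1}^L \log(4n/\kappa_\ell)$. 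For $\tau_1$, the estimate $n-\kappa_1\ge n/2\ge n/4$ yields $\tau_1\le\frac{2}{\log 2}\log(4n/\kappa_1)\le\tau$. For $\frac{n}{n-\kappa_1}\tau'$, the minimality $\kappa_1\le \kappa_\ell$ gives $\log(4n/\kappa_\ell)\le\log(4n/\kappa_1)$ for every $\ell$, while $\frac{n}{n-\kappa_1}\le\frac{L}{L-1}$; together these reduce the required inequality to
\[
\frac{1}{L-1}\sum_{\ell\ne 1}\log(4n/\kappa_\ell)\;\le\;\log(4n/\kappa_1),
\]
which is automatic because the left side is an average of terms each bounded by the right. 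The factor ``$4$'' inside the logarithms is precisely what allows the inductive loss $\frac{n}{n-\kappa_1}>1$ to be absorbed at every peeling step.
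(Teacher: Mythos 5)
Your proof is correct. Note first that the paper does not actually prove Theorem \ref{th:F}: it is quoted from Filmus, O'Donnell and Wu, whose original argument (as the paper explains in Section \ref{sec:strategy}) conditions on a single coordinate $\omega_i$. Your route is therefore genuinely different from theirs, and it is also different from the paper's own machinery while sharing its key ingredients. You condition on the entire region occupied by the rarest color --- exactly the paper's idea (i) --- and your two comparison lemmas are, up to which half of the chain rule they address, the estimates the paper proves on the way to (\ref{pr:first}) and (\ref{pr:second}): the $L^2$ triangle/Jensen step that pushes the Dirichlet form through $\EE[f|\xi]$ onto the two-slice, and the $\frac{n}{n-\kappa_1}$ rescaling of the conditional Dirichlet form on the complementary multislice. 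Where you diverge is in the paper's idea (ii): instead of averaging the chain rule over all colors with the weights $1-\frac{\kappa_\ell}{n}$, you peel one color at a time and absorb the per-step loss $\frac{n}{n-\kappa_1}\le\frac{L}{L-1}$ into the factor $4$ inside the logarithms, which works precisely because you peel the \emph{minimal} color. The individual steps all check out: the base case via Theorem \ref{th:BL}, the bound $\tau_1\le\frac{2}{\log 2}\log\left(\frac{4n}{\kappa_1}\right)$ from $n-\kappa_1\ge n/2$, and the absorption inequality $\frac{\kappa_1}{n-\kappa_1}\sum_{\ell\ne 1}\log\left(\frac{4n}{\kappa_\ell}\right)\le\log\left(\frac{4n}{\kappa_1}\right)$. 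The trade-off is also worth recording: your sequential peeling necessarily accumulates one logarithm per color, so it delivers the $\sum_\ell\log\left(\frac{4n}{\kappa_\ell}\right)$ bound but cannot reach the sharp $\log\left(\frac{n}{\am}\right)$ of Theorem \ref{th:main}; the paper's simultaneous weighted decomposition is exactly what removes that accumulation.
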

Several remarkable consequences of this estimate can be found in the recent works  \cite{2018arXiv180903546F,MR4079633}. A quick comparison with Theorems \ref{th:RW},  \ref{th:RT} and \ref{th:BL} shows that the bound is of the right order of magnitude in the extreme case $L=2$, but is off by a factor of order $n$ at the other extreme, $L=n$. Regarding what the correct order of magnitude of $\tls(\kappa)$ should be for \emph{all} ranges of $\kappa$, {Filmus}, {O'Donnell} and {Wu} proposed the following beautifully simple dependency. 
\begin{conjecture}[See page 3 in \cite{2018arXiv180903546F}]\label{cj:main}For any choice of the parameter $\kappa$,
\begin{eqnarray*}
 \tls(\kappa) & \asymp & \log \left(\frac{n}{\am}\right),
\end{eqnarray*}
where $\am:=\min\{\kappa_1,\ldots,\kappa_L\}$ and where $\asymp$ means equality up to universal pre-factors. 
\end{conjecture}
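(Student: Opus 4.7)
The conjecture has matching upper and lower bounds, which I treat separately.

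\textbf{Lower bound.} I would use the coarsening order of Remark~\ref{rk:coarsening}: merging all but one minimum-count color into a single ``other'' class yields a Markov lumping onto the two-color chain with parameters $(\am,n-\am)$. Since lumpings preserve Dirichlet form and entropy on lifted test functions, $\tls$ is monotone under coarsening, and Theorem~\ref{th:BL} of Lee and Yau then gives $\tls(\kappa) \ge \tls(\am,n-\am) \ge \varepsilon \log(n/\am)$, where the last step uses $\am \le n/2$.

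\textbf{Upper bound.} This is the substantive direction, and I would adapt the martingale method of Lee and Yau \cite{MR1675008} to the multicolor setting. Pick a minimum color $\ell^*$ and consider the projection $\pi(\omega) = \{i: \omega_i = \ell^*\} \in \Omega_{(\am,n-\am)}$. Splitting $\ent_\kappa(f) = \EE[\ent(f|\pi)] + \ent(\EE[f|\pi])$, the outer term lives on the two-color chain, where Theorem~\ref{th:BL} applies; the Dirichlet form $\cE_{(\am,n-\am)}(\sqrt{\EE[f|\pi]},\sqrt{\EE[f|\pi]})$ can then be compared to $\cE_\kappa(\sqrt f,\sqrt f)$ with an absolute constant by a short Cauchy--Schwarz argument. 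Specifically, for each cross-transposition $(i,j)$, the swap $\omega \mapsto \omega^{ij}$ is a bijection $\pi^{-1}(s) \to \pi^{-1}(s\triangle\{i,j\})$, so $\EE[f|\pi=s'] - \EE[f|\pi=s]$ is an average of $f(\omega^{ij}) - f(\omega)$; factoring $f - f\circ\omega^{ij} = (\sqrt f + \sqrt{f\circ\omega^{ij}})(\sqrt f - \sqrt{f\circ\omega^{ij}})$ and using $a+b \le (\sqrt a + \sqrt b)^2$ after Cauchy--Schwarz cancels the prefactor and yields the required bound.

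\textbf{Main obstacle.} The inner entropy term averages entropies over slices on the smaller multislice $\Omega_{\kappa^-}$ (with $L-1$ colors on $n-\am$ positions) and invites an induction on $L$. The difficulty is that naively each step inflates the Dirichlet form by a factor $n/(n-\am)$, compounding to as much as $2^{L-1}$ after $L-1$ iterations. I would therefore recurse instead on the \emph{total size} $n$, by conditioning on a single coordinate $\omega_n$ rather than the whole $\ell^*$-class: the between-coordinate entropy then lives on the $L$-point color space with weights $(\kappa_\ell/n)_\ell$, whose log-Sobolev constant is $\asymp \log(n/\am)$, and whose Dirichlet form compares to $O(1/n)\cdot\cE_\kappa(\sqrt f,\sqrt f)$ by the same Cauchy--Schwarz trick (applied now to pairs of color-slices rather than $\pi$-slices). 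The within-coordinate entropy contracts by the factor $(n-2)/(n-1)$ per step, so $n$ iterations produce a telescoping sum of order $\log(n/\am)$. The remaining subtlety is the boundary case $\am\in\{1,2\}$, where conditioning on a unique minimum-color position shrinks $\am$ further and can violate the inductive hypothesis; this should be absorbed by a suitably strengthened statement (e.g.\ $\tls(\kappa) \le C\log(n/\am) + D$ with a small constant additive slack $D$), or by restricting the conditioning step to non-minimum-color positions and handling the $\ell^*$-class via a separate block decomposition.
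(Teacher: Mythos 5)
Your lower bound is sound, and it takes a different route from the paper: you coarsen $\kappa$ down to $(\am,n-\am)$ and invoke the Lee--Yau lower bound of Theorem~\ref{th:BL}, whereas the paper evaluates the log-Sobolev quotient directly on the indicator of the event $\{\xi_\ell=\{1,\ldots,\am\}\}$ and obtains the sharp pre-factor $1$ via $\binom{n}{k}\ge(n/k)^k$. Your version only yields the unspecified universal constant $\varepsilon$ from Theorem~\ref{th:BL}, which suffices for Conjecture~\ref{cj:main} as stated but not for the sharp form in Theorem~\ref{th:main}. Likewise, the first half of your upper-bound sketch (conditioning on the minimum-color region $\pi=\xi_{\ell^*}$ and controlling $\ent(\EE[f|\pi])$ by Jensen together with Theorem~\ref{th:BL}) is essentially the paper's treatment of the term $\Sigma_2$ and is correct.

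The gap is in your resolution of the ``main obstacle.'' Having correctly observed that recursing on the number of colors by peeling off one color class at a time compounds multiplicative losses, you retreat to a recursion on $n$ that conditions on a single coordinate $\omega_n$. But this is precisely the martingale scheme of Filmus, O'Donnell and Wu, and it is known to deliver only $\tls(\kappa)\lesssim\sum_{\ell}\log(4n/\kappa_\ell)$ (Theorem~\ref{th:F}), which is at least of order $L\log L$ and hence off by a factor of order $L$ from the conjectured $\log(n/\am)$ when $\kappa$ is balanced. The loss is not confined to the boundary case $\am\in\{1,2\}$: it is structural. When you bound the between-coordinate entropy on the $L$-point color space and then propagate the comparison constants through all $n$ levels of the decomposition, the contributions of the different colors accumulate additively over $\ell$, and the one-line assertion that ``$n$ iterations produce a telescoping sum of order $\log(n/\am)$'' is exactly the step that fails --- it is the entire content of the theorem, and no proof of it is offered.

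What actually closes the argument in the paper is a different pair of ideas. One conditions on the whole $\ell$-colored region $\xi_\ell$ for \emph{every} color $\ell$ (not just the minimum one, and not a single site), multiplies the resulting chain-rule identity by the non-uniform weight $1-\kappa_\ell/n$, and sums over $\ell$. This yields $(L-1)\ent(f)=\Sigma_1+\Sigma_2$, where each cross-pair $(i,j)$ with $\omega_i\ne\omega_j$ is counted exactly $L-2$ times in $\Sigma_1$ (once for each color $\ell\notin\{\omega_i,\omega_j\}$), so that Proposition~\ref{pr:recursive} reads $(L-1)\tls(\kappa)\le(L-2)\max_\ell\tls(\kappa^{\setminus\ell})+\frac{4}{\log 2}\log(n/\am)$ and the induction on $L$ closes with no compounding whatsoever. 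The weights $1-\kappa_\ell/n$ exactly cancel the factor $n/(n-\kappa_\ell)$ that you identified as the obstacle; without them (or some equivalent device) neither of your two proposed recursions reaches the conjectured bound.
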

Note that the right-hand side decreases smoothly from $\log n $ downto $0$ as $\kappa$ becomes coarser and coarser, in agreement with Remark \ref{rk:coarsening}.  
To better appreciate this conjecture,  consider the \emph{single-site dynamics} obtained by projecting the multislice onto a fixed coordinate $i\in[n]$: under our transposition walk, the variable  $\omega_i$  simply gets  refreshed at unit rate according to the marginal distribution
\begin{eqnarray*}
\PP_\kappa\left(\omega_i=\ell\right) & = & \frac{\kappa_\ell}{n},\qquad \ell\in[L].
\end{eqnarray*}
The log-Sobolev constant of this trivial chain is well-known to be 
\begin{eqnarray*}
\tls^{\textrm{triv}}(\kappa) & = & \frac{n}{n-{2\am}}\log\left(\frac{n}{\am}-1\right) \ \asymp \ \log \left(\frac{n}{\am}\right),
\end{eqnarray*}
see  \cite[Theorem A.1]{MR1410112}.
Although our probability space $\Omega_\kappa$ is far from being a product space,  the above conjecture asserts that the transposition walk  mixes essentially as well as if the coordinates $\omega_1,\ldots,\omega_n$ were being refreshed independently.  A brief look at Theorems \ref{th:RW}, \ref{th:RT} and \ref{th:BL} will convince the reader that this intuition is correct in all known special cases. 

\section{Results}

 \subsection{Main estimate}
 \label{sec:appli}
 Our main result is the determination of the log-Sobolev constant $\tls(\kappa)$ for all values of the parameter $\kappa$, up to a (small) universal multiplicative constant.
\begin{theorem}[The log-Sobolev constant of the multislice]\label{th:main}For all values of  $\kappa$,
\begin{eqnarray*}
\log\left(\frac{n}{\am}\right) \ \le \ \tls(\kappa)  & \le & \frac{4}{\log 2}\log\left(\frac{n}{\am}\right).
\end{eqnarray*}
\end{theorem}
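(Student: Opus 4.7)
I would apply the LSI to $f(\omega) := g(\omega_1)$ for an arbitrary $g \colon [L] \to \R_+$. Under the uniform measure on $\Omega_\kappa$ the marginal law of $\omega_1$ is $\nu(\ell) = \kappa_\ell/n$, and since $\sum_{i=1}^n g(\omega_i)$ is deterministic on $\Omega_\kappa$, exchangeability forces $\cov_\kappa(g(\omega_1), g(\omega_2)) = -\var_\nu(g)/(n-1)$, so that a short computation yields $\cE_\kappa(\sqrt f, \sqrt f) = \var_\nu(\sqrt g)$. Restricting the LSI to such $f$ therefore reproduces the single-atom LSI for $\nu$, giving $\tls(\kappa) \ge \tls^{\mathrm{triv}}(\kappa) = \tfrac{n}{n-2\am}\log(n/\am - 1)$. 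An elementary calculus inequality (equivalent to $\log(1/(1-u)) \le 2u \log(1/u)$ on $(0,1/2]$) upgrades the right-hand side to $\log(n/\am)$, as required.

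\textbf{Upper bound by induction on the number of colors.} The base case $L = 2$ is Theorem \ref{th:BL}: Lee--Yau's constant $\tfrac{2}{\log 2}\log(n^2/(\am(n-\am)))$ is already bounded by $\tfrac{4}{\log 2}\log(n/\am)$ when $\am \le n/2$. For the inductive step $L \ge 3$, relabel so that $\kappa_L = \am$ and introduce the binary coarsening $\chi_i := \mathbf{1}_{\omega_i = L}$, whose law is uniform on the 2-urn state space $\Omega_{(\am, n-\am)}$. The tower identity
\[
\ent_\kappa(f) \;=\; \ent\bigl(\EE_\kappa[f \mid \chi]\bigr) \;+\; \EE_\kappa\bigl[\ent_\kappa(f \mid \chi)\bigr]
\]
splits the problem into (i) the entropy of a function of $\chi$, controlled by applying Lee--Yau to the 2-urn chain driving $\chi$, and (ii) the average of conditional entropies, where given $\chi$ the function $f$ lives on the reduced multislice $\Omega_{\kappa'}$ with $\kappa' := (\kappa_1, \ldots, \kappa_{L-1})$ of total size $n - \am$; the inductive hypothesis then yields an LS constant bounded by $\tfrac{4}{\log 2}\log((n-\am)/\min\kappa') \le \tfrac{4}{\log 2}\log(n/\am)$, since $\min\kappa' \ge \am$.

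\textbf{Key structural identity and main obstacle.} Every transposition $(i,j)$ falls in exactly one of two disjoint categories relative to $\chi$: either \emph{internal} ($\chi_i = \chi_j$, contributing to the reduced Dirichlet form on the $\chi = 0$ block) or \emph{crossing} ($\chi_i \ne \chi_j$, driving the 2-urn chain on $\chi$). This gives an orthogonal decomposition $\cE_\kappa = \cE^{\mathrm{in}} + \cE^{\mathrm{cross}}$, and Jensen's inequality applied to the jointly convex, $1$-homogeneous map $(a, b) \mapsto (\sqrt a - \sqrt b)^2$ delivers the pointwise bound $\cE^{2\text{-urn}}(\sqrt{\EE_\kappa[f\mid\chi]}, \sqrt{\EE_\kappa[f\mid\chi]}) \le \cE^{\mathrm{cross}}(\sqrt f, \sqrt f)$. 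Because the two sub-forms are disjoint, the LS constants of the two pieces should combine by taking their \emph{maximum} rather than their \emph{sum}, which is precisely what would avoid the factor-$L$ overhead in Theorem \ref{th:F}. The step I expect to be most delicate is controlling the prefactor $n/(n-\am)$ that arises when rescaling the reduced Dirichlet form on $n - \am$ sites against the sub-form $\cE^{\mathrm{in}}$ normalized by $n$; keeping this factor from cascading across the $L - 1$ inductive steps---perhaps by a cleverer choice of split color at each stage, or by an entropy-tensorization statement specifically tailored to the multislice---is what ultimately pins down the sharp universal prefactor $\tfrac{4}{\log 2}$.
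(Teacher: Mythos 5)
Your lower bound is correct and takes a genuinely different route from the paper. The paper obtains $\tls(\kappa)\ge\log(n/\am)$ by restricting the log-Sobolev inequality to the indicator of the event $\{\xio_\ell=\{1,\ldots,\am\}\}$ (an isoperimetric argument); you instead restrict it to single-coordinate observables $f=g(\omega_1)$, and your computation $\cE_\kappa(\sqrt f,\sqrt f)=\var_\nu(\sqrt g)$ is exact (the negative covariance $-\var_\nu(\sqrt g)/(n-1)$ precisely compensates the $1/(2n)$ normalization over the $n-1$ relevant pairs), so $\tls(\kappa)\ge\tls^{\textrm{triv}}(\kappa)$. The closing calculus inequality $\log\frac{1}{1-u}\le 2u\log\frac1u$ on $(0,\frac12]$ holds (the difference is concave in $u$ and vanishes at both endpoints), so this piece is complete and arguably more direct than the paper's.

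The upper bound, however, has a genuine gap, and it is exactly the step you flag as ``delicate'': the prefactor $\frac{n}{n-\am}$ does not merely complicate the constant, it destroys the induction. Conditioning only on the single region $\xio_L$ gives
$\tls(\kappa)\le\max\bigl\{\tls(\am,n-\am),\ \tfrac{n}{n-\am}\,\tls(\kappa_1,\ldots,\kappa_{L-1})\bigr\}$,
and unrolling this for $\kappa=(1,\ldots,1)$ the prefactors telescope to $\prod_{j=k+1}^{n}\frac{j}{j-1}=\frac nk$, so the recursion only yields $\tls(1,\ldots,1)\le\max_k\frac{n}{k}\tls(1,k-1)\asymp n$ instead of $\log n$. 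No ``cleverer choice of split color'' can help in this symmetric case. The paper's fix is precisely the missing idea: it conditions on \emph{every} color's region $\xio_\ell$, multiplies the $\ell$-th chain-rule identity by the weight $1-\frac{\kappa_\ell}{n}$ --- which exactly cancels the offending prefactor $\frac{n}{n-\kappa_\ell}$ --- and sums over $\ell$. Summing produces $(L-1)\ent(f)$ on the left, a coefficient $L-2$ on the inductive (internal) term because each transposition with $\omega_i\ne\omega_j$ is internal to exactly $L-2$ of the conditionings, and a bounded crossing term; the resulting recursion $(L-1)\tls(\kappa)\le(L-2)\max_\ell\tls(\kappa^{\setminus\ell})+\frac{4}{\log2}\log(n/\am)$ has contraction factor $\frac{L-2}{L-1}<1$ and closes. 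Your orthogonal decomposition $\cE_\kappa=\cE^{\mathrm{in}}+\cE^{\mathrm{cross}}$ and the Jensen step for $\cE^{\mathrm{cross}}$ are both correct and are indeed used in the paper, but without the weighted averaging over all colors the argument does not prove the theorem.
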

This confirms Conjecture \ref{cj:main}. We note that the improvement upon  Theorem \ref{th:F} can be considerable  if  the dimension $L$ is large. Specifically, the upper bound of Filmus, {O'Donnell} and {Wu} is \emph{always} super-linear in $L$, since the convexity of $t\mapsto t\log t$ yields
\begin{eqnarray*}
\sum_{\ell=1}^L \log \left(\frac{n}{\kappa_\ell}\right) & \ge & L\log L,
\end{eqnarray*}
for any choice of the parameter $\kappa$. In contrast, our result shows that
\begin{eqnarray}
\label{llogl}
\tls(\kappa) & \asymp & \log L,
\end{eqnarray}
 as long as the vector $\kappa=(\kappa_1,\ldots,\kappa_L)$ is reasonably \emph{balanced}, in the (weak) sense that its lowest entry is of the same order as the mean entry. In particular, our estimate can be readily used to sharpen the dependency in $L$ in the various quantitative results that were derived from  Theorem \ref{th:F} in \cite{2018arXiv180903546F}. To avoid a lengthy detour through hypercontractivity, we choose to leave the details to the reader, and to instead describe two different applications: a sharp quantification of the ``small-set expansion'' phenomenon for the multislice, and a general log-Sobolev inequality for the {colored exclusion processes}.

\begin{remark}[Sharpness of universal constants]\label{rk:sharp}In our lower bound, the pre-factor in front of the logarithm can not be replaced by any larger universal constant, since we have 
\begin{eqnarray*}
\tls(\kappa) & = & \left(1+o(1)\right)\log\left(\frac{n}{\am}\right),
\end{eqnarray*}
in the special case $\kappa=(1,n-1)$, as per Theorem \ref{th:RW}. Regarding the upper bound, our pre-factor can not be improved by more than a $\log 2$ factor. Indeed, we will show that
\begin{eqnarray*}
\tls(\kappa) & \ge & \left(4-o(1)\right)\log\left(\frac{n}{\am}\right),
\end{eqnarray*}
in the important special case $\kappa=\left(\lfloor n/2\rfloor,\lceil n/2\rceil\right)$, see (\ref{sharpiota}). In fact, the possibly loose $\log 2$ term comes directly from the one appearing in Theorem \ref{th:BL}, and any improvement of the latter will immediately imply the same improvement in our upper bound.
\end{remark}
\subsection{Small-set expansion} 
 Recall that the multislice is naturally equipped with a graph structure by declaring two vertices $\omega,\omega'\in\Omega_\kappa$ to be adjacent if they differ at exactly two coordinates. Following standard graph-theoretical notation, we write $|\partial A|$ for the \emph{edge boundary} of a subset $A\subseteq \Omega_\kappa$, i.e., the set of edges  having one end-point in $A$ and the other outside $A$. Let us consider the problem of finding a constant $\iota(\kappa)$, as large as possible, such that  the isoperimetric inequality
\begin{eqnarray}
\label{isop}
\frac{|\partial A|}{|A|} & \ge & \iota(\kappa)\,\log\left(\frac{|\Omega_\kappa|}{|A|}\right),
\end{eqnarray}
holds for all non-empty subsets $A\subseteq \Omega_\kappa$. The left-hand side measures the \emph{conductance} of $A$, i.e. the facility for the walk to escape from $A$, given that it currently lies in $A$. The presence of the logarithmic term on the other side constitutes a notable improvement upon the more standard \emph{Cheeger inequality}: instead of being constant, the right-hand side of (\ref{isop}) gets larger as the set $A$ gets smaller, thereby capturing the celebrated \emph{small-set expansion} phenomenon  \cite{21923,MR1798047, 2018arXiv180903546F}. Our log-Sobolev estimate allows us to  determine the fundamental quantity $\iota(\kappa)$ for all values of $\kappa$, up to a small universal constant. 
\begin{corollary}[Small-set expansion for the multislice]\label{co:exp}The optimal constant in (\ref{isop}) satisfies
\begin{eqnarray}
\frac{\log 2}{4}\frac{n}{\log\left(\frac{n}{\am}\right)} \ \le \ \iota(\kappa) & \le &  \frac{n}{\log\left(\frac{n}{\am}\right)}.
\end{eqnarray}
\end{corollary}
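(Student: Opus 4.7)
The plan is to derive both bounds directly from Theorem \ref{th:main}: the lower bound on $\iota(\kappa)$ follows from the standard trick of plugging an indicator into the log-Sobolev inequality (\ref{LSI}), and the matching upper bound is witnessed by a single explicit set. No new ideas are needed beyond these reductions; the only step demanding care is the combinatorial bookkeeping that turns the Dirichlet energy of $\mathbf{1}_A$ into $|\partial A|$.

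For the lower bound, I take $f=\mathbf{1}_A$ for an arbitrary non-empty $A\subseteq\Omega_\kappa$. Since $f^{2}=f$ we have $\sqrt{f}=f$, and since $x\log x$ vanishes on $\{0,1\}$,
$$\ent_\kappa(f) \;=\; \frac{|A|}{|\Omega_\kappa|}\log\!\left(\frac{|\Omega_\kappa|}{|A|}\right).$$
On the Dirichlet form side, for each pair $i<j$ the set $\{\omega : \mathbf{1}_A(\omega)\ne\mathbf{1}_A(\omega^{ij})\}$ has cardinality equal to twice the number of boundary edges of the form $\{\omega,\omega^{ij}\}$ (one contribution per endpoint), and each boundary edge is realized by a unique unordered pair $\{i,j\}$, so a short count in (\ref{def:dir}) gives
$$\cE_\kappa(\sqrt{f},\sqrt{f}) \;=\; \cE_\kappa(f,f) \;=\; \frac{|\partial A|}{n\,|\Omega_\kappa|}.$$
Substituting into (\ref{LSI}) and invoking $\tls(\kappa)\le (4/\log 2)\log(n/\am)$ from Theorem \ref{th:main} yields $|\partial A|/|A|\ge (n/\tls(\kappa))\log(|\Omega_\kappa|/|A|)\ge (\log 2/4)\cdot (n/\log(n/\am))\cdot\log(|\Omega_\kappa|/|A|)$, which is the claimed lower bound on $\iota(\kappa)$.

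For the upper bound, I exhibit a saturating set. Fix $\ell^{*}\in[L]$ with $\kappa_{\ell^{*}}=\am$ and set $A:=\{\omega\in\Omega_\kappa : \omega_1=\ell^{*}\}$. Transitivity of the symmetric group on the remaining coordinates gives $|A|/|\Omega_\kappa|=\am/n$, hence $\log(|\Omega_\kappa|/|A|)=\log(n/\am)$. An edge $\{\omega,\omega^{ij}\}$ lies in $\partial A$ iff the transposition alters the first coordinate, i.e.\ $i=1$ and $\omega_j\ne\ell^{*}$; for each $\omega\in A$ there are exactly $n-\am$ such positions $j$, and each boundary edge has a unique endpoint in $A$, so $|\partial A|=|A|(n-\am)$. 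Therefore
$$\iota(\kappa)\;\le\;\frac{|\partial A|}{|A|\log(|\Omega_\kappa|/|A|)}\;=\;\frac{n-\am}{\log(n/\am)}\;\le\;\frac{n}{\log(n/\am)},$$
and the proof is complete.
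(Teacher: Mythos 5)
Your argument is correct. The lower bound is exactly the paper's route: restrict the log-Sobolev inequality to indicators, compute $\ent_\kappa({\bf 1}_A)=\frac{|A|}{|\Omega_\kappa|}\log\frac{|\Omega_\kappa|}{|A|}$ and $\cE_\kappa({\bf 1}_A,{\bf 1}_A)=\frac{|\partial A|}{n|\Omega_\kappa|}$ (your factor-of-two bookkeeping is right), and invoke the upper bound of Theorem \ref{th:main}. For the upper bound you choose a different witness: the paper takes $A=\{\xio_\ell=\{1,\ldots,\am\}\}$, i.e.\ it pins the entire $\ell$-colored region, which gives $|A|=|\Omega_\kappa|/{n\choose\am}$, $|\partial A|=\am(n-\am)|A|$, and hence $\iota(\kappa)\le \am(n-\am)/\log{n\choose\am}$, after which the binomial estimate ${n\choose k}\ge(n/k)^k$ yields the stated bound. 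Your set $\{\omega_1=\ell^*\}$ pins a single coordinate, giving directly $\iota(\kappa)\le(n-\am)/\log(n/\am)$ with no binomial estimate needed; the counts $|A|/|\Omega_\kappa|=\am/n$ and $|\partial A|=|A|(n-\am)$ are both verified correctly. Your choice is slightly simpler and fully suffices for the corollary. The only thing you lose is that the paper's sharper intermediate bound $\am(n-\am)/\log{n\choose\am}$ is reused later to prove the near-optimality claim (\ref{sharpiota}) of Remark \ref{rk:sharp} in the balanced case $\kappa=(\lfloor n/2\rfloor,\lceil n/2\rceil)$, where your set only gives a constant that is off by a factor of about $4\log 2$; but that is outside the scope of the statement you were asked to prove.
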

The proof will be given in Section \ref{sec:final}. As in Remark \ref{rk:sharp},  the universal constants appearing in our estimate can not be improved, apart from perhaps removing the $\log 2$ term. 

\subsection{Colored exclusion process} A far-reaching generalization of the transposition walk on the multislice $\Omega_\kappa$ consists in  allowing each of the $n\choose 2$ possible transpositions to occur at a different (possibly zero) rate. More precisely, we fix a non-negative symmetric array  $G=(G_{ij})_{1\le i,j\le n}$ (which we interpret as a weighted graph) and consider the following weighted version of the Dirichlet form (\ref{def:dir}):
\begin{eqnarray}
\label{def:weighted}
\cE_{\kappa}^G\left(f,g\right) & := & \frac{1}{2}\sum_{1\le i<j\le n}G_{ij}\,\EE_\kappa\left[\left(\nabla^{ij}f\right)\left(\nabla^{ij}g\right)\right].
\end{eqnarray}
The canonical setting -- to which we shall here stick for simplicity -- consists in taking $G$ to be the transition matrix of the simple random walk on a regular graph, which we henceforth identify with $G$. The resulting process is known as  the $\kappa-$\emph{colored exclusion process} on $G$, see \cite{MR2629990}. By varying the parameter $\kappa$, we obtain a rich family of diffusion models on $G$ including:
\begin{enumerate}[(i)]
\item the \emph{simple random walk} on $G$, when $\kappa=(1,n-1)$;
\item the \emph{$k-$particle exclusion process} on $G$, when $\kappa=(k,n-k)$;
\item the \emph{interchange process} on $G$, when $\kappa=(1,\ldots,1)$.
\end{enumerate}
Comparing the mixing properties of these three processes constitutes a rich and active research problem, see \cite{MR2023023, MR2244427, MR2629990, MR3069380,MR3077529, MR3978223, hermon2018exclusion,2018arXiv181110537A}. Perhaps the most celebrated result in this direction is the remarkable fact that their Poincaré constants coincide, as conjectured by Aldous and established by Caputo, Liggett and Richthammer \cite{MR2629990}. 
 \begin{theorem}[Insensitivity of the Poincaré constant, see \cite{MR2629990}]\label{th:aldous}
 The Poincaré constant $\trel(\kappa,G)$ of the $\kappa-$colored exclusion process on $G$ does not depend on $\kappa$. In particular, it equals the Poincaré constant $\trel(G)$ of the simple random walk on $G$. 
 \end{theorem}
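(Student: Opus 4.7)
The plan is to prove the two inequalities $\trel(G) \leq \trel(\kappa, G)$ and $\trel(\kappa, G) \leq \trel(G)$ separately, using coarsening on one side and the Caputo-Liggett-Richthammer analysis of the interchange process on the other.

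\textbf{Easy direction.} For any $\kappa = (\kappa_1, \ldots, \kappa_L)$ with $L \geq 2$, the parameter $(1, n-1)$ is obtained from $\kappa$ by merging all colors except one, so Remark \ref{rk:coarsening} (applied verbatim to the weighted Dirichlet form $\cE^G_\kappa$) guarantees that the $(1, n-1)$-colored exclusion process on $G$ is a projection of the $\kappa$-colored one. Tracking the position of the unique color-$1$ particle identifies this projected chain with the simple random walk on $G$. Since projection can only enlarge the spectral gap, we conclude that $\trel(G) = \trel((1, n-1), G) \leq \trel(\kappa, G)$.

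\textbf{Hard direction.} By the same coarsening argument applied in the opposite direction, the $\kappa$-colored exclusion is itself a projection of the interchange process on $G$ (corresponding to $\kappa = (1, \ldots, 1)$), so it suffices to show that $\trel((1,\ldots,1), G) \leq \trel(G)$. The interchange generator
\[ L_G \;=\; \sum_{1 \leq i < j \leq n} G_{ij}\,(I - \tau_{ij}) \]
acts on the group algebra $\R[\mathfrak{S}_n]$, and by the representation theory of $\mathfrak{S}_n$ its spectrum decomposes along the irreducible components. The defining representation contributes precisely the spectrum of $I - G$, so one must show that the smallest non-zero eigenvalue of $L_G$ is already attained in this particular isotypic component.

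\textbf{Main obstacle.} The key ingredient is the \emph{octopus inequality} of Caputo-Liggett-Richthammer: for any non-negative weights $w_1, \ldots, w_d$ attached to the edges of a star centered at a vertex $v$ with leaves $u_1, \ldots, u_d$, one has the operator inequality
\[ \sum_{1 \leq i < j \leq d} w_i w_j\,(I - \tau_{u_i u_j}) \;\leq\; \Bigl(\sum_{i=1}^d w_i\Bigr) \sum_{i=1}^d w_i\,(I - \tau_{v u_i}) \]
in $\R[\mathfrak{S}_n]$. Granted this, one proceeds by induction on $n$: for any vertex $v$ of $G$, use the octopus inequality to bound the contribution to $L_G$ of the transpositions not involving $v$ by a multiple of the ``$v$-star'' part, then invoke the inductive hypothesis on $G \setminus \{v\}$ to handle the residual edge-weights. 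The delicate positivity statement underlying the octopus inequality is by far the hardest step: its proof is an intricate algebraic identity in $\R[\mathfrak{S}_n]$, established by expanding both sides as explicit sums of permutations and carefully matching cancellations.
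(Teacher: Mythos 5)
First, note that Theorem \ref{th:aldous} is not proved in this paper: it is imported verbatim from Caputo, Liggett and Richthammer \cite{MR2629990}, so there is no internal proof to compare against. The parts of your proposal that you actually argue are sound. The ``easy direction'' via projection onto a single tagged particle is correct, and the reduction of general $\kappa$ to the interchange process by coarsening is exactly the mechanism the paper formalizes in Lemma \ref{lm:coarsening}: a function on $\Omega_\kappa$ lifts to a function on $\Omega_{(1,\dots,1)}$ with the same variance and the same Dirichlet energy, whence $\trel(\kappa,G)\le \trel((1,\dots,1),G)$, and symmetrically $\trel(G)=\trel((1,n-1),G)\le\trel(\kappa,G)$. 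Your statement of the octopus inequality also appears to be the correct (sharp) one.

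The genuine gap is that the entire weight of the argument then rests on the octopus inequality, which you state but do not prove, describing it only as ``an intricate algebraic identity\dots established by expanding both sides and matching cancellations.'' That positivity statement \emph{is} Aldous' conjecture for all practical purposes --- it is the step that kept the problem open for two decades --- so deferring it leaves the proposal as a roadmap rather than a proof. There are also two inaccuracies in your description of the induction: (i) the octopus inequality does not bound ``the transpositions not involving $v$''; it bounds the transpositions \emph{among the neighbours} of $v$, weighted by $G_{vi}G_{vj}$, in terms of the star at $v$ --- these are precisely the new edges created when the network is reduced to $G\setminus\{v\}$, while the genuinely $v$-free part of $L_G$ is what the inductive hypothesis handles; (ii) the induction in \cite{MR2629990} is not a bare induction on the vertex set but requires decomposing $L^2(\mathfrak{S}_n)$ according to which particle occupies $v$ and absorbing the resulting edge-weights via a separate elementary comparison. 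If you intend a self-contained proof, the octopus inequality must be established; if you intend a reduction to \cite{MR2629990}, only your first paragraph and the coarsening step are needed, and the rest should simply be a citation.
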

 In a sense, this result asserts that the Poincaré constant  is too ``rough'' to capture the influence of the color profile $\kappa$ on the mixing properties of the colored exclusion process. It is thus natural to turn one's attention to  the finer log-Sobolev constant.

 \begin{question}
 How does the log-Sobolev constant $\tls(\kappa,G)$ depend upon the parameter $\kappa$ ? 
 \end{question} Our main result answers this question in the simple \emph{mean-field} setting, where $G$ is the complete graph. However, it implies an estimate of  $\tls(\kappa,G)$ for arbitrary $G$, by means of the celebrated ``comparison method'' introduced by  Diaconis and Saloff-Coste \cite{MR1245303,MR1233621}. A particularly pleasant observation here is that we do not even need to build a comparison theory for the colored exclusion process: we can simply recycle the one that has already been developed for the interchange process. Specifically, let $c(G)$ be the smallest number such that the functional inequality
\begin{eqnarray}
\label{comp}
\cE^G_{(1,\ldots,1)}(f,f) & \le & c(G)\,\cE_{(1,\ldots,1)}(f,f),
\end{eqnarray}
holds for all $f\colon\Omega_{(1,\ldots,1)}\to\R$.  This fundamental quantity is known as the \emph{comparison constant} of the interchange process on $G$. It was shown in \cite{2018arXiv181110537A} that 
\begin{eqnarray*}
c(G) & \lesssim & \tmix(G),
\end{eqnarray*}
where $\lesssim$ means inequality up to a universal multiplicative constant,  and where $\tmix(G)$ denotes the mixing time of the simple random walk on $G$. It is in fact believed that 
\begin{eqnarray}
\label{conj}
c(G) & \asymp & \trel(G),
\end{eqnarray}
see Conjecture 2 in \cite{hermon2019interchange}. This refinement, inspired by an analogous relation for the \emph{Zero-Range process} \cite{MR3984254}, is already known to hold for several natural families of graphs ranging from low-dimensional tori \cite{2018arXiv181110537A} to high-dimensional products \cite{hermon2019interchange}.  Those estimates can be combined with our main result to yield a general log-Sobolev inequality for the colored exclusion process (see Section \ref{sec:colored} for details): 
\begin{corollary}[Log-Sobolev inequality for the colored exclusion process]\label{co:colored}We have
\begin{eqnarray*}
\max\left\{2\trel(G),\log\left(\frac{n}{\am}\right)\right\} \ \le \ \tls(\kappa, G) & \le & \frac{4}{\log 2}\,c(G)\log\left(\frac{n}{\am}\right).
\end{eqnarray*}
\end{corollary}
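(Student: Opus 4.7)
The proof splits into the two inequalities. The lower bound piece $2\trel(G)\le\tls(\kappa,G)$ is immediate: it combines the general inequality~(\ref{order}) with the Caputo--Liggett--Richthammer identity $\trel(\kappa,G)=\trel(G)$ provided by Theorem~\ref{th:aldous}. For the piece $\log(n/\am)\le\tls(\kappa,G)$, I would test the LSI~(\ref{LSI}) with $f=\IND_A$ on $A=\{\omega\in\Omega_\kappa:\omega_1=\ell^{*}\}$, where $\ell^{*}$ is a color realizing the minimum $\am$. A short calculation gives $\ent_\kappa(\IND_A)=(\am/n)\log(n/\am)$, while the stochasticity $\sum_{j\ne 1} G_{1j}=1$ collapses the edge sum to $\cE^G_\kappa(\IND_A,\IND_A)=\am(n-\am)/(n(n-1))$. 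The resulting ratio $(n-1)\log(n/\am)/(n-\am)$ is at least $\log(n/\am)$ since $\am\ge 1$.

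For the upper bound, the plan is to promote the comparison constant $c(G)$ from the interchange process on $\Omega_{(1,\ldots,1)}=\mathfrak{S}_n$ to the general multislice $\Omega_\kappa$. Fix any coloring $\chi\colon[n]\to[L]$ with fibre sizes $\kappa_1,\ldots,\kappa_L$ and define the projection $\pi\colon\mathfrak{S}_n\to\Omega_\kappa$ by $\pi(\sigma)_i=\chi(\sigma(i))$. For any $f\colon\Omega_\kappa\to\R$, the lift $\tilde f:=f\circ\pi$ satisfies the pointwise identity $\nabla^{ij}\tilde f(\sigma)=(\nabla^{ij}f)(\pi(\sigma))$, because transposing positions in $\sigma$ projects exactly to transposing the corresponding colors in $\pi(\sigma)$. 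Since $\pi$ has constant fibres of size $\kappa_1!\cdots\kappa_L!$, it pushes the uniform measure on $\mathfrak{S}_n$ onto the uniform measure on $\Omega_\kappa$, and hence both the $G$-weighted and the unweighted Dirichlet forms are preserved by the lift:
\[
\cE_{(1,\ldots,1)}(\tilde f,\tilde f)=\cE_\kappa(f,f),\qquad \cE^G_{(1,\ldots,1)}(\tilde f,\tilde f)=\cE^G_\kappa(f,f).
\]
Applying the comparison inequality~(\ref{comp}) to $\tilde f$ therefore transfers it verbatim to $\Omega_\kappa$, with the same constant $c(G)$.

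The corollary is then obtained by chaining this transferred comparison with Theorem~\ref{th:main}: for every $f\ge 0$, one has
\[
\ent_\kappa(f)\ \le\ \tls(\kappa)\,\cE_\kappa(\sqrt f,\sqrt f)\ \le\ c(G)\,\tls(\kappa)\,\cE^G_\kappa(\sqrt f,\sqrt f)\ \le\ \frac{4\,c(G)}{\log 2}\log\!\frac{n}{\am}\,\cE^G_\kappa(\sqrt f,\sqrt f).
\]
The only substantive input is Theorem~\ref{th:main}; the intertwining lemma above is the main technical point specific to this corollary, but it is a direct check once the projection $\pi$ is set up.
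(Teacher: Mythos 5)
Your proof is correct. The upper bound and the $2\trel(G)$ half of the lower bound coincide with the paper's argument: your projection $\pi$ and the intertwining $\pi(\sigma^{ij})=\pi(\sigma)^{ij}$ are exactly the paper's coarsening map $\Psi$ and Lemma \ref{lm:coarsening}, and the chain $\ent_\kappa(f)\le\tls(\kappa)\,\cE_\kappa(\sqrt f,\sqrt f)=\tls(\kappa)\,\cE_{(1,\ldots,1)}(\sqrt f\circ\Psi,\sqrt f\circ\Psi)\le c(G)\tls(\kappa)\,\cE^G_\kappa(\sqrt f,\sqrt f)$ followed by Theorem \ref{th:main} is verbatim what the paper does. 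The one genuine divergence is the remaining lower bound $\tls(\kappa,G)\ge\log(n/\am)$: the paper tests the inequality on the event $A=\{\xi_{\ell^*}=\{1,\ldots,\am\}\}$ in which the \emph{entire} $\ell^*$-colored region is pinned, so that $|A|/|\Omega_\kappa|=1/{n\choose\am}$ and the binomial estimate ${n\choose k}\ge(n/k)^k$ is needed to conclude, whereas you test it on the single-coordinate event $\{\omega_1=\ell^*\}$. Your computation checks out: $\ent_\kappa(\IND_A)=(\am/n)\log(n/\am)$, and since $(\nabla^{1j}\IND_A)^2=\IND_{(\text{exactly one of }\omega_1,\omega_j\text{ equals }\ell^*)}$ with probability $2\am(n-\am)/(n(n-1))$, stochasticity of the rows of $G$ gives $\cE^G_\kappa(\IND_A,\IND_A)=\am(n-\am)/(n(n-1))$, and $(n-1)/(n-\am)\ge 1$ because $\am\ge 1$. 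This is slightly more elementary. What the paper's choice of test set buys in exchange is reuse: the same $A$ drives the upper bound on $\iota(\kappa)$ in Corollary \ref{co:exp} and the sharpness claim of Remark \ref{rk:sharp}, so the paper gets those statements from one computation. Either way the constant $\log(n/\am)$ comes out the same.
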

To appreciate the sharpness of this general inequality, note that the lower and upper bounds are of the same order in the following two generic situations:
\begin{itemize}
\item For families of graphs with $c(G)\asymp 1$ (i.e. ``well-connected'' graphs), we obtain
\begin{eqnarray*}
\tls(\kappa, G) & \asymp & \log\left(\frac{n}{\am}\right),
\end{eqnarray*}
exactly as in the mean-field case. Note that this potentially constitutes a considerable extension of our main result, since the class of graphs satisfying  $c(G)\asymp 1$ is believed to contain all expanders, as per (\ref{conj}). 
\item For graphs satisfying the conjecture (\ref{conj}), in the regime $\am\ge \varepsilon n$ ($\varepsilon>0$ fixed), we get
\begin{eqnarray*}
\tls(\kappa, G) & \asymp & \trel(G).
\end{eqnarray*}
This constitutes a multi-colored generalization of several estimates obtained for two colors, including  \cite[Theorem 4]{MR1675008} on the cycle and \cite[Corollary 5]{hermon2019interchange} on the hypercube. 
\end{itemize}
\begin{remark}[Mixing times]
One of the many interests of those log-Sobolev estimates is that they provide powerful controls on the strong $L^\infty-$mixing time of the process, see e.g., \cite{MR2341319}. Let us here just give one concrete example: on the $d-$dimensional hypercube, our work implies that the balanced colored exclusion process with an arbitrarily fixed number $L\ge 2$ of colors mixes in time $\Theta(d^2)$. The special case $L=2$ of this statement had been conjectured several years ago by Wilson \cite{MR2023023}, and was settled only recently \cite{hermon2018exclusion}. 
\end{remark}
We end this section with an intriguing possibility, which arises naturally  in view of Theorem \ref{th:aldous} and of what happens in the mean-field case (Lemma \ref{pr:insensitive}).
\begin{question}[Sensitivity of the modified log-Sobolev constant]Can the choice of the parameter $\kappa$ affect  $\tmls(\kappa,G)$ by more than a universal multiplicative constant ?
\end{question}
 A negative answer would, in particular, substantially improve our current knowledge on the mixing times of the interchange and exclusion processes on general graphs. 
We note that, unlike our main result, the estimate on $\tmls(\kappa)$ provided by Lemma \ref{pr:insensitive} can \emph{not} be directly transferred to more general graphs, since the modified log-Sobolev constant is notoriously \emph{not} amenable to comparison techniques. This severe drawback constitutes a strong point in favor of log-Sobolev inequalities (as opposed to their modified versions) for mean-field interacting particle models, and was one of the motivations for the present work.
%XXX remove unnecessary numberings.

%XXX mention mixing times bounds.
\section{Proofs}

\subsection{General strategy} 
\label{sec:strategy}
Let us start with an elementary but crucial observation about the multislice.
\begin{remark}[Recursive structure]\label{obs} If $(\omega_1,\ldots,\omega_n)$ is uniformly distributed on  $\Omega_\kappa$, then the conditional law of $(\omega_1,\ldots,\omega_{i-1},\omega_{i+1},\ldots,\omega_n)$ given $\{\omega_i=\ell\}$ is uniform on $\Omega_{\kappa'}$, where 
$$\kappa'=\left(\kappa_1,\ldots,\kappa_{\ell-1},\kappa_\ell-1,\kappa_{\ell+1},\ldots,\kappa_L\right).$$
\end{remark}
Such a simple recursive structure suggests the possibility of proving Theorem \ref{th:main}  by induction over the dimension $n$, using the ``chain rule'' for entropy (see formula (\ref{decomposition}) below).  This is in fact a classical  strategy for establishing  functional inequalities, known as the ``martingale method''. Introduced  by Lu \& Yau \cite{MR1233852} in the context of Kawasaki and Glauber dynamics, it has been successfully applied to various  interacting particle systems \cite{MR1483598,MR1675008,MR2023890,MR2094147,Cap,hermon2019entropy}, as well as other Markov chains enjoying an appropriate recursive structure \cite{MR1944012,1181997,MR2099650,2018arXiv180903546F,2019arXiv190202775H}. In particular, this is how Theorem \ref{th:BL}  was proved. However, as explained in detail in \cite{2018arXiv180903546F}, moving from the special case $L=2$ covered by Theorem \ref{th:BL} to the general case studied in Theorem \ref{th:F} significantly complicates the inductive argument, resulting in the loose $L\log L$ dependency mentioned at (\ref{llogl}). Here we introduce two simple ideas to bypass those complications and prove Conjecture \ref{cj:main}:
\begin{enumerate}[(i)]
\item instead of just a single site, we condition on a whole region being colored with $\ell\in[L]$;
\item when averaging the contributions from the various colors, we assign more weight to rare colors, which are the one which really govern $\tls(\kappa)$. More precisely, our decomposition (\ref{dec:ent}) below gives weight $1-\frac{\kappa_\ell} n$ to the $\ell-$colored region, whereas the traditional uniform average over all sites would give it the weight $\frac{\kappa_\ell}{n}$.  
\end{enumerate}
Let us now implement those ideas. We fix an observable $f\colon\Omega_\kappa\to\R_+$ once and for all. To lighten notation, we drop the index $\kappa$ from our expectations, and write simply
\begin{eqnarray*}
 \ent(f) & := & \EE[f\log f] -\EE[f]\log\EE[f],
  \end{eqnarray*}  
 for the entropy of $f$. If $Z$ is a random variable on $\Omega_\kappa$, we define the \emph{conditional entropy} of $f$ given  $Z$  by simply replacing all expectations with conditional expectations, i.e.
 \begin{eqnarray*}
 \ent(f|Z) & := & \EE[f\log f|Z] -\EE[f|Z]\log\EE[f|Z].
  \end{eqnarray*} 
We then have the following elementary ``chain rule'':
\begin{eqnarray}
\label{decomposition}
\ent(f) & = & \EE\left[\ent(f|Z)\right]+\ent\left(\EE[f|Z]\right).
\end{eqnarray}
The choice $Z=\omega_i$ is of course natural in light of Remark \ref{obs}, and this was the one adopted in the proofs of Theorems \ref{th:BL} and \ref{th:F}. However, as mentioned in (i) above, we choose here to condition instead on the whole $\ell-$colored region, i.e., on the random set
\begin{eqnarray}
\label{def:xio}
\xio_\ell & := & \left\{i\in[n]\colon \omega_i=\ell\right\}.
\end{eqnarray}
With $Z=\xio_\ell$, the formula (\ref{decomposition}) becomes
\begin{eqnarray}
\label{dec:set}
\ent(f) & = & \EE\left[\ent\left(f|\xio_\ell\right)\right]+\ent\left(\EE\left[f|\xio_\ell\right]\right).
\end{eqnarray}
Following our second idea (ii), we multiply both sides of this identity by the ``unusual'' weight $1-\frac{\kappa_\ell}{n}$ and then sum over all colors $\ell\in[L]$. Recalling  (\ref{sum}), we obtain the following formula, which will constitute the basis of our induction:
\begin{eqnarray}
\label{dec:ent}
(L-1)\ent(f) & = & \underbrace{\sum_{\ell=1}^L\left(1-\frac{\kappa_\ell}{n}\right)\EE\left[\ent\left(f|\xio_\ell\right)\right]}_{\Sigma_1}+\underbrace{\sum_{\ell=1}^L\left(1-\frac{\kappa_\ell}{n}\right)\ent\left(\EE\left[f|\xio_\ell\right]\right)}_{\Sigma_2}.
\end{eqnarray}
Our main task will consist in estimating the two terms $\Sigma_1$ and $\Sigma_2$ on the right-hand side, in terms of the log-Sobolev constants of certain lower-dimensional multislices. More precisely, we let $\kappa^{\setminus \ell}$ denote the parameter obtained from $\kappa$ by removing the $\ell-$th entry, i.e.
\begin{eqnarray*}
\kappa^{\setminus\ell} &:= & \left(\kappa_1,\ldots,\kappa_{\ell-1},\kappa_{\ell+1},\ldots,\kappa_L\right),
\end{eqnarray*}
and we will prove in the next section that
\begin{eqnarray}
\label{pr:first}
\Sigma_1 & \le & (L-2)\max_{\ell\in[L]}\left\{\tls\left(\kappa^{\setminus \ell}\right)\right\}\cE_\kappa\left(\sqrt{f},\sqrt{f}\right);\\
\label{pr:second}
\Sigma_2 & \le & \max_{\ell\in[L]}\left\{2\left(1-\frac{\kappa_\ell}n\right)\tls\left(\kappa_\ell,n-\kappa_\ell\right)\right\}\cE_\kappa\left(\sqrt{f},\sqrt{f}\right).
\end{eqnarray}
Plugging those estimates into (\ref{dec:ent}) yields a log-Sobolev inequality for $\Omega_\kappa$, thereby establishing the following recursive estimate.
\begin{proposition}[Recursive log-Sobolev estimate]\label{pr:recursive}We have
\begin{eqnarray*}
(L-1)\tls\left(\kappa\right) & \le & (L-2)\max_{\ell\in[L]}\left\{\tls\left(\kappa^{\setminus \ell}\right)\right\}+\max_{\ell\in[L]}\left\{2\left(1-\frac{\kappa_\ell}n\right)\tls\left(\kappa_\ell,n-\kappa_\ell\right)\right\}.
\end{eqnarray*}
\end{proposition}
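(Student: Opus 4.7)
The plan is to establish the two intermediate estimates (\ref{pr:first}) and (\ref{pr:second}) separately and then plug them into the decomposition (\ref{dec:ent}). Together they yield $(L-1)\ent(f)\le C\,\cE_\kappa(\sqrt{f},\sqrt{f})$ for every $f\colon\Omega_\kappa\to\R_+$, with $C$ equal to the right-hand side of the proposition, so that Proposition \ref{pr:recursive} follows by definition of $\tls$. Throughout, $\omega$ is uniform on $\Omega_\kappa$ and $f$ is fixed.

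For $\Sigma_1$ the lever is Remark \ref{obs}: conditional on $\xio_\ell$, the restriction of $\omega$ to the $n-\kappa_\ell$ positions outside $\xio_\ell$ is uniform on $\Omega_{\kappa^{\setminus\ell}}$. By definition of $\tls(\kappa^{\setminus\ell})$,
\begin{eqnarray*}
\ent(f|\xio_\ell) & \le & \frac{\tls(\kappa^{\setminus\ell})}{2(n-\kappa_\ell)}\sum_{\substack{1\le i<j\le n\\ i,j\notin\xio_\ell}}\EE\left[(\nabla^{ij}\sqrt{f})^2\,\big|\,\xio_\ell\right].
\end{eqnarray*}
The pleasant coincidence is that the weight $(1-\kappa_\ell/n)=(n-\kappa_\ell)/n$ multiplying this bound inside $\Sigma_1$ cancels exactly the denominator $n-\kappa_\ell$ of the reduced Dirichlet form, producing the universal normalisation $1/(2n)$. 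Taking the outer expectation and summing over $\ell\in[L]$, each pair $\{i,j\}$ inherits the weight $\sum_\ell\1_{\{\omega_i\ne\ell,\,\omega_j\ne\ell\}}=L-|\{\omega_i,\omega_j\}|$, which equals $L-2$ whenever $\omega_i\ne\omega_j$ and is irrelevant otherwise since $\nabla^{ij}\sqrt{f}=0$. Factoring out $\max_\ell\tls(\kappa^{\setminus\ell})$ delivers (\ref{pr:first}).

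For $\Sigma_2$ set $g_\ell(\xio):=\EE[f|\xio_\ell=\xio]$, viewed as a non-negative function on the binary multislice $\Omega_{(\kappa_\ell,n-\kappa_\ell)}$. Since the marginal law of $\xio_\ell$ is uniform on that slice, the definition of $\tls(\kappa_\ell,n-\kappa_\ell)$ bounds $\ent(g_\ell)$ by $\tls(\kappa_\ell,n-\kappa_\ell)\,\tilde{\cE}_\ell(\sqrt{g_\ell},\sqrt{g_\ell})$, where $\tilde{\cE}_\ell$ is the Dirichlet form of the binary transposition walk. A swap $\xio\mapsto\xio^{ij}$ alters $\xio$ only when exactly one of $i,j$ belongs to it; in that case $\omega\mapsto\omega^{ij}$ is a measure-preserving bijection from $\{\omega:\xio_\ell(\omega)=\xio\}$ onto $\{\omega:\xio_\ell(\omega)=\xio^{ij}\}$, so that $g_\ell(\xio^{ij})=\EE[f(\omega^{ij})|\xio_\ell=\xio]$. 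Cauchy--Schwarz for conditional expectations then gives the pointwise bound $(\nabla^{ij}\sqrt{g_\ell})^2\le\EE[(\nabla^{ij}\sqrt{f})^2\,|\,\xio_\ell]$; combining this with the tower property, and noting that pairs with $\omega_i=\omega_j=\ell$ contribute nothing to the gradient,
\begin{eqnarray*}
\tilde{\cE}_\ell(\sqrt{g_\ell},\sqrt{g_\ell}) & \le & \frac{1}{2n}\sum_{1\le i<j\le n}\EE\left[\left(\1_{\{\omega_i=\ell\}}+\1_{\{\omega_j=\ell\}}\right)(\nabla^{ij}\sqrt{f})^2\right].
\end{eqnarray*}
Bounding $(1-\kappa_\ell/n)\tls(\kappa_\ell,n-\kappa_\ell)$ by half the claimed maximum and summing over $\ell$, the identity $\sum_\ell\left(\1_{\{\omega_i=\ell\}}+\1_{\{\omega_j=\ell\}}\right)=2$ absorbs the resulting factor $1/2$ and produces (\ref{pr:second}).

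The real subtlety lies in the bookkeeping of $\Sigma_2$. The ``non-uniform'' weight $(1-\kappa_\ell/n)$ chosen in (\ref{dec:ent}), together with the fact that each edge $\{i,j\}$ with $\omega_i\ne\omega_j$ sits in precisely two binary sub-slices (those of colors $\omega_i$ and $\omega_j$), is exactly what converts the naive $\sum_\ell$ into the $\max_\ell$ appearing in (\ref{pr:second}). Without these two ingredients one loses a factor of order $L$ and recovers only the suboptimal estimate of Theorem \ref{th:F}.
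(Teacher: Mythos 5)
Your proof is correct and follows essentially the same route as the paper's: the same conditional log-Sobolev inequality on $\Omega_{\kappa^{\setminus\ell}}$ for $\Sigma_1$, the same projection onto the two-color Bernoulli--Laplace chain for $\Sigma_2$, and the same cancellations driven by the weights $1-\kappa_\ell/n$. The only cosmetic difference is that you justify the key pointwise bound $(\nabla^{ij}\sqrt{g_\ell})^2\le\EE[(\nabla^{ij}\sqrt{f})^2\mid\xio_\ell]$ via Cauchy--Schwarz for conditional expectations, whereas the paper invokes Jensen's inequality for the jointly convex map $(u,v)\mapsto(\sqrt{u}-\sqrt{v})^2$; these are equivalent.
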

From this, the upper bound in Theorem \ref{th:main} follows by an easy induction over the number $L$ of colors, using the known log-Sobolev estimate for $L=2$ (Theorem \ref{th:RT}). The details, as well as the proof of the lower bound, are provided in Section \ref{sec:final}.

\subsection{Main recursion}
This section is devoted to proving the two technical estimates (\ref{pr:first}) and (\ref{pr:second}) which, in view of the decomposition (\ref{dec:ent}), establish  Proposition \ref{pr:recursive}. 
\begin{proof}[Proof of the first estimate (\ref{pr:first})]
Conditionally on the $\ell-$colored region $\xio_\ell$,  $f$ may be regarded as a function of the remaining coordinates $(\omega_i\colon i\in[n]\setminus \xio_\ell)$, which form a uniformly distributed element of  $\Omega_{\kappa^{\setminus\ell}}$. Consequently, the log-Sobolev inequality for the multislice $\Omega_{\kappa^{\setminus\ell}}$ gives
\begin{eqnarray*}
\ent\left(f|\xio_\ell\right) & \le &  \frac{\tls(\kappa^{\setminus\ell})}{2(n-\kappa_\ell)}\sum_{1\le i<j\le n}\EE\left[\left.\left(\nabla^{ij}\sqrt{f}\right)^2{\bf 1}_{(i\notin\xi_\ell,j\notin\xi_\ell)}\right|\xio_\ell\right].
\end{eqnarray*}
Note that the event in the indicator can be rewritten as $\{\ell\notin\{\omega_i,\omega_j\}\}$, and that we may impose the restriction $\{\omega_i\ne\omega_j\}$ at no cost, since $\nabla^{ij}\sqrt{f}=0$ on the event $\{\omega_i=\omega_j\}$.
Taking expectations and rearranging, we arrive at
\begin{eqnarray*}
\left(1-\frac{\kappa_\ell}{n}\right)\EE\left[\ent\left(f|\xio_\ell\right)\right]  & \le & \frac{\tls(\kappa^{\setminus\ell})}{2n}\sum_{1\le i<j\le n}\EE\left[\left(\nabla^{ij}\sqrt{f}\right)^2{\bf 1}_{(\omega_i\ne\omega_j)}{\bf 1}_{(\ell\notin\{\omega_i,\omega_j\})}\right].
\end{eqnarray*}
Summing over all $\ell\in[L]$ yields
\begin{eqnarray*}
\sum_{\ell=1}^L\left(1-\frac{\kappa_\ell}{n}\right)\EE\left[\ent\left(f|\xio_\ell\right)\right] & \le & (L-2)\max_{\ell\in[L]}\left\{\tls\left(\kappa^{\setminus \ell}\right)\right\}\cE_\kappa\left(\sqrt{f},\sqrt{f}\right),
\end{eqnarray*}
which is exactly the claim made at (\ref{pr:first}).
\end{proof}

\begin{proof}[Proof of the second estimate (\ref{pr:second})]
Fix $\ell\in[L]$, and let us write
\begin{eqnarray}
\label{def:F}
\EE[f|\xio_\ell] & = & F(\xio_\ell),
\end{eqnarray}
for some non-negative function $F=F_\ell$. The distribution of $\xio_\ell$ is uniform  over all $\kappa_\ell-$element subsets of $[n]$, and this is precisely the stationary distribution of the occupied set in the $\kappa_\ell-$particle Bernoulli-Laplace diffusion model on $n$ sites. When applied to the function $F$, the log-Sobolev inequality for this process reads as follows:
\begin{eqnarray}
\label{induced}
\ent\left(\EE[f|\xio_\ell]\right) & \le & \frac {\tls(\kappa_\ell,n-\kappa_\ell)}{2n}\sum_{1\le i<j\le n}\EE\left[\left(\sqrt{F\left(\xio_\ell^{ij}\right)}-\sqrt{F\left(\xio_\ell\right)}\right)^2\right],
\end{eqnarray}
where $A^{ij}$ denotes the set obtained from $A$ by swapping the membership status of $i$ and $j$:
\begin{eqnarray*}
A^{ij} & := & \left\{
\begin{array}{ll}
A\cup\{j\}\setminus\{i\} & \textrm{if }i\in A,j\notin A\\
A\cup\{i\}\setminus\{j\} & \textrm{if }i\notin A,j\in A\\
A & \textrm{ else.}
\end{array}
\right.
\end{eqnarray*}
Now, fix $1\le i<j\le n$ and a $\kappa_\ell-$element set $A\subseteq[n]$. First, by definition of $F$, we have
\begin{eqnarray*}
\EE[f|\xio_\ell=A] & = & F(A).
\end{eqnarray*}
On the other hand, since the involution $\tau^{ij}\colon\omega\mapsto \omega^{ij}$ preserves the uniform law on $\Omega_\kappa$ and maps the event $\{\xi_\ell=A\}$ onto the event $\{\xi_\ell=A^{ij}\}$, we have 
\begin{eqnarray*}
\EE\left[f\circ\tau^{ij}|\xio_\ell=A\right] & = & \EE\left[f|\xio_\ell=A^{ij}\right] \ = \ F\left(A^{ij}\right).
\end{eqnarray*}
But the function
$
\Phi\colon (u,v)\mapsto (\sqrt{u}-\sqrt{v})^2
$
is convex on $\R_+^2$, so Jensen's inequality yields
\begin{eqnarray*}
\left(\sqrt{F(A^{ij})}-\sqrt{F(A)}\right)^2
& = & \Phi\left(\EE\left[f\circ\tau^{ij}|\xio_\ell=A\right],\EE\left[f|\xio_\ell=A\right]\right)\\
& \le & \EE\left[\Phi(f\circ\tau^{ij},f)|\xio_\ell=A\right] \ = \ \EE\left[\left.\left(\nabla^{ij} \sqrt{f}\right)^2\right|\xio_\ell=A\right].
\end{eqnarray*}
Moreover, we have $\left(\sqrt{F(A^{ij})}-\sqrt{F(A)}\right)^2=0$ when $A$ contains neither $i$ nor $j$, so we obtain
\begin{eqnarray*}
\left(\sqrt{F(A^{ij})}-\sqrt{F(A)}\right)^2 & \le & \EE\left[\left.\left(\nabla^{ij} \sqrt{f}\right)^2\right|\xio_\ell=A\right]\left({\bf 1}_{(i\in A)}+{\bf 1}_{(j\in A)}\right).
\end{eqnarray*}
Averaging this inequality over all possible $\kappa_\ell-$element set $A\subseteq[n]$ yields
\begin{eqnarray*}
\EE\left[\left(\sqrt{F\left(\xio_\ell^{ij}\right)}-\sqrt{F(\xio_\ell)}\right)^2\right] & \le & \EE\left[\left(\nabla^{ij} \sqrt{f}\right)^2\left({\bf 1}_{(i\in \xio_\ell)}+{\bf 1}_{(j\in \xio_\ell)}\right)\right].
\end{eqnarray*}
We may now plug this estimate back into (\ref{induced}) to arrive at 
\begin{eqnarray*}
\ent\left(\EE[f|\xio_\ell]\right) & \le & \frac{\tls(\kappa_\ell,n-\kappa_\ell)}{2n}\sum_{1\le i<j\le n}\EE\left[\left(\nabla^{ij} \sqrt{f}\right)^2\left({\bf 1}_{\left(\omega_i=\ell\right)}+{\bf 1}_{\left(\omega_j=\ell\right)}\right)\right].
\end{eqnarray*}
Finally, multiplying by $\left(1-\frac{\kappa_\ell}{n}\right)$ and summing over all $\ell\in[L]$ gives
\begin{eqnarray*}
\sum_{\ell=1}^L\left(1-\frac{\kappa_\ell}{n}\right)\ent\left(\EE\left[f|\xio_\ell\right]\right) & \le & \max_{\ell\in[L]}\left\{2\left(1-\frac{\kappa_\ell}n\right)\tls\left(\kappa_\ell,n-\kappa_\ell\right)\right\}\cE_\kappa\left(\sqrt{f},\sqrt{f}\right),
\end{eqnarray*}
which is precisely the claim (\ref{pr:second}). 
\end{proof}

\subsection{Putting things together}
\label{sec:final}

To complete the proof of Theorem \ref{th:main}, we only need an estimate on the second term appearing on the right-hand side of our recursive log-Sobolev inequality. We of course use Theorem \ref{th:BL}.
\begin{lemma}[Two-color estimate]\label{lm:L2}For any $\ell\in[L]$, we have
\begin{eqnarray*}
\left(1-\frac{\kappa_\ell}{n}\right)\tls\left(\kappa_\ell,n-\kappa_\ell\right) & \le & \frac{2}{\log 2}\log\left(\frac{n}{\am}\right).
\end{eqnarray*}
\end{lemma}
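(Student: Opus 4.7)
The plan is to invoke Theorem \ref{th:BL} to bound the two-color log-Sobolev constant, reducing the lemma to the purely arithmetic inequality
$$\left(1-\frac{\kappa_\ell}{n}\right)\log\left(\frac{n^2}{\kappa_\ell(n-\kappa_\ell)}\right) \;\le\; \log\left(\frac{n}{\am}\right).$$
The crucial structural input is the bound $\am \le \min(\kappa_\ell,\,n-\kappa_\ell)$: the first half is the definition of $\am$, while the second follows because $n-\kappa_\ell = \sum_{j\ne \ell}\kappa_j$ is a non-empty sum (using $L\ge 2$) of terms each at least $\am$.

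Writing $p := \kappa_\ell/n$ and $q := \am/n$, this constraint becomes $q \le \min(p,\,1-p) \le 1/2$, and I must establish
$$h(p) \;:=\; (1-p)\log\!\left(\frac{1}{p(1-p)}\right) \;\le\; \log\frac{1}{q}$$
for every such pair $(p,q)$. The next step is to reduce this to a one-variable problem by showing that $h$ is strictly decreasing on $(0,1)$: a direct computation gives $h'(p) = \log[p(1-p)] + 2 - 1/p$, and since $p(1-p) \le 1/4$ and $2 - 1/p \le 1$ on $(0,1)$, we obtain $h'(p) \le 1-2\log 2 < 0$. Consequently the maximum of $h$ over the admissible range $p \in [q,\,1-q]$ is attained at the left endpoint $p = q$.

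The proof thus reduces to the extremal inequality $h(q) \le \log(1/q)$ for $q \in (0,1/2]$. Expanding $h(q) = (1-q)\log(1/q) + (1-q)\log(1/(1-q))$ and rearranging, this is precisely the statement that the binary entropy $f(x) := -x\log x$ satisfies $f(q) \ge f(1-q)$ on $[0,1/2]$. This symmetry follows in one line by noting that $g(q) := f(q)-f(1-q)$ vanishes at $q=0$ and $q=1/2$ and has $g''(q) = -(1-2q)/(q(1-q)) \le 0$ on that interval, so that concavity plus the boundary values force $g \ge 0$. I do not anticipate any serious obstacle: the whole argument is a short calculus exercise, cleanly streamlined by the structural observation $\am \le \min(\kappa_\ell,\,n-\kappa_\ell)$ made at the outset, and no loss of universal constants is incurred beyond the $2/\log 2$ already present in Theorem \ref{th:BL}.
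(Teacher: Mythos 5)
Your proof is correct and takes essentially the same route as the paper: apply Theorem \ref{th:BL}, reduce to the extremal case $\kappa_\ell=\am$ by monotonicity in $\kappa_\ell$, and verify the resulting inequality $-q\log q\ge -(1-q)\log(1-q)$ on $[0,\tfrac12]$ via concavity plus the vanishing boundary values. Your explicit check that $h'(p)\le 1-2\log 2<0$ merely fills in a monotonicity step the paper asserts without detail.
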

\begin{proof}By Theorem \ref{th:BL}, we have
\begin{eqnarray*}
\left(1-\frac{\kappa_\ell}{n}\right)\tls\left(\kappa_\ell,n-\kappa_\ell\right)  & \le & \frac{2}{\log 2}\left(1-\frac{\kappa_\ell}{n}\right)\log\left(\frac{n^2}{\kappa_\ell(n-\kappa_\ell)}\right).
\end{eqnarray*}
Since the right-hand side is maximized when $\kappa_\ell=\am$, our task boils down to establishing
\begin{eqnarray*}
\left(1-\frac{\am}{n}\right)\log\left(\frac{n^2}{\am(n-\am)}\right) & \le & \log\left(\frac{n}{\am}\right).
\end{eqnarray*}
But this is exactly the special case $t=\frac{\am}{n}$ of the inequality
\begin{eqnarray*}
t\log t-(1-t)\log\left(1-t\right) & \le & 0,
\end{eqnarray*}
which is valid for all $t\in[0,\frac 12]$. To see this, note that  the left-hand side is a convex function of $t\in[0,\frac 12]$ (as can be easily checked by differentiating) and that it equals zero at the two boundary points $t=0$ and $t=\frac 12$. 
\end{proof}
We are now in position to prove our main result. 
\begin{proof}[Proof of the upper bound in Theorem \ref{th:main}]Our aim is to prove that
\begin{eqnarray}
\label{claim}
\tls(\kappa) & \le & \Phi(\kappa) \ := \ \frac{4}{\log 2}\log\left(\frac{n}{\am}\right).
\end{eqnarray}
We proceed by induction over the dimension $L$ of the parameter $\kappa=(\kappa_1,\ldots,\kappa_L)$. 
By combining Proposition \ref{pr:recursive} and Lemma \ref{lm:L2}, we have
\begin{eqnarray}
\label{end}
(L-1)\tls(\kappa) & \le & \Phi(\kappa)+(L-2)\max_{\ell\in[L]}\tls(\kappa^{\setminus\ell}),
\end{eqnarray}
which already establishes the claim in the base case $L=2$. Now, assume that $L\ge 3$ and that the claim already holds for lower values of $L$. In particular, we know that
\begin{eqnarray*}
\tls(\kappa^{\setminus \ell}) & \le & \Phi(\kappa^{\setminus \ell}),
\end{eqnarray*}
for all $\ell\in[L]$. But $\Phi(\kappa^{\setminus\ell})\le \Phi(\kappa)$, since removing an entry from the parameter $\kappa$ can only decrease the value of the sum $n=\kappa_1+\cdots+\kappa_L$ and increase the value of the minimum $\am=\min\{\kappa_1,\ldots,\kappa_L\}$. Consequently, (\ref{end}) gives
\begin{eqnarray*}
(L-1)\tls(\kappa) & \le & \Phi(\kappa)+(L-2)\Phi(\kappa)\ =\ (L-1)\Phi(\kappa), 
\end{eqnarray*}
and (\ref{claim}) is established.
\end{proof}
Our upper bound on $\tls(\kappa)$ implies the lower bound on $\iota(\kappa)$ given in Corollary \ref{co:exp}, thanks to the well-known relation between log-Sobolev inequalities and small-set expansion:
\begin{lemma}[Log-Sobolev inequality and small-set expansion]\label{lm:vs}We have 
\begin{eqnarray*}
\iota(\kappa)\tls(\kappa) & \ge & n.
\end{eqnarray*}
\end{lemma}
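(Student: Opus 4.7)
The plan is to test the log-Sobolev inequality against the indicator function $f = \mathbf{1}_A$ of an arbitrary nonempty subset $A \subseteq \Omega_\kappa$, and to read off the desired isoperimetric inequality after rearranging. This is the standard bridge between functional inequalities and small-set expansion, and the main work is purely a bookkeeping computation of both sides.

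First I would compute the entropy. Since $f$ takes values in $\{0,1\}$, we have $f \log f \equiv 0$ and $\EE_\kappa[f] = |A|/|\Omega_\kappa|$, so $\ent_\kappa(f) = (|A|/|\Omega_\kappa|) \log(|\Omega_\kappa|/|A|)$. Next I would compute the Dirichlet form, using $\sqrt{f}=f$ for a $\{0,1\}$-valued function. The key observation is that $(\nabla^{ij} \mathbf{1}_A)^2(\omega) = 1$ exactly when $\{\omega,\omega^{ij}\}$ is an edge of $\GG_\kappa$ crossing the boundary $\partial A$, and $0$ otherwise. Since each boundary edge $\{\omega,\omega'\}$ arises from a \emph{unique} pair $i<j$ (the two coordinates at which $\omega$ and $\omega'$ differ) and is counted once from each of its two endpoints, the double sum in (\ref{def:dir}) equals $2|\partial A|/|\Omega_\kappa|$, whence $\cE_\kappa(\sqrt{f},\sqrt{f}) = |\partial A|/(n |\Omega_\kappa|)$.

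Substituting these two expressions into the log-Sobolev inequality (\ref{LSI}) and multiplying both sides by $n |\Omega_\kappa|/|A|$ yields
$$\frac{|\partial A|}{|A|} \;\ge\; \frac{n}{\tls(\kappa)}\,\log\!\left(\frac{|\Omega_\kappa|}{|A|}\right),$$
which holds for every nonempty $A \subseteq \Omega_\kappa$. By definition of $\iota(\kappa)$ as the largest admissible constant in (\ref{isop}), this forces $\iota(\kappa) \ge n/\tls(\kappa)$, equivalently $\iota(\kappa)\tls(\kappa) \ge n$.

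There is no genuine obstacle: the only point that warrants care is the edge-counting in the Dirichlet form, namely the factor $2$ coming from the two endpoints of each boundary edge, together with the fact that each edge of $\GG_\kappa$ corresponds to a unique transposition $(i,j)$. Everything else is immediate from the definitions.
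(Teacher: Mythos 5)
Your proposal is correct and is exactly the paper's argument: plug $f=\mathbf{1}_A$ into the log-Sobolev inequality, compute $\ent_\kappa(\mathbf{1}_A)=\frac{|A|}{|\Omega_\kappa|}\log\bigl(\frac{|\Omega_\kappa|}{|A|}\bigr)$ and $\cE_\kappa(\mathbf{1}_A,\mathbf{1}_A)=\frac{|\partial A|}{n|\Omega_\kappa|}$, and rearrange. Your edge-counting (each boundary edge corresponds to a unique pair $i<j$ and is counted once from each endpoint) is the only point needing care, and you handled it correctly.
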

\begin{proof}This follows from the definitions of $\iota(\kappa)$ and $\tls(\kappa)$, once we have observed that 
\begin{eqnarray*}
\cE_\kappa\left({\bf 1}_A,{\bf 1}_A\right) & = & \frac{|\partial A|}{n|\Omega_\kappa|},\\
\ent_\kappa\left({\bf 1}_A\right) & = & \frac{|A|}{|\Omega_\kappa|}\log\left(\frac{|\Omega_\kappa|}{|A|}\right),
\end{eqnarray*}
for any event $A\subseteq \Omega_\kappa$. 
\end{proof}
The inequality in Lemma \ref{lm:vs} is obtained by restricting the definition of the log-Sobolev inequality to indicator functions, and could therefore be rather loose. However, it turns out to be sharp in the present case, as we will now see.
\begin{proof}[Proof of the remaining halves of Theorem \ref{th:main} and  Corollary \ref{co:exp}]By definition, we have 
\begin{eqnarray}
\label{iota}
\iota(\kappa) & \le  & \frac{|\partial A|}{|A|\log\left(\frac{|\Omega_\kappa|}{|A|}\right)},
\end{eqnarray}
for any non-empty event $A\subseteq \Omega_\kappa$. We fix $\ell\in[L]$ such that $\kappa_\ell=\am$, and consider the choice
\begin{eqnarray}
\label{def:A}
A & := & \left\{\xio_\ell = \{1,\ldots,\am\} \right\},
\end{eqnarray}
where we recall that $\xio_\ell$ is the $\ell-$colored region. Since $\xi_\ell$ is uniformly distributed over all $\am-$element subsets of $[n]$, we have
\begin{eqnarray*}
|A|& = & \frac{|\Omega_\kappa|}{{n\choose \am}}.
\end{eqnarray*}
On the other hand, from any state inside $A$, there are precisely $\am(n-\am)$ transpositions that result in a state outside $A$, and hence
\begin{eqnarray*}
|\partial A| & = & \am(n-\am)|A|.
\end{eqnarray*} 
Thus, the inequality (\ref{iota}) gives
\begin{eqnarray}
\label{sharp}
\iota(\kappa) & \le & \frac{\am(n-\am)}{\log{n \choose  \am}}\\
\nonumber & \le & \frac{n}{\log{\left(\frac{n}{\am}\right)}},
\end{eqnarray}
where the second line uses the classical binomial estimate ${n\choose k}\ge \left(\frac{n}{k}\right)^k$, valid for all $1\le k\le n$. This establishes the upper bound in Corollary \ref{co:exp}, as well as the lower bound in Theorem \ref{th:main}, by Lemma \ref{lm:vs}. Finally, note that in the case $\kappa=(\lfloor n/2\rfloor,\lfloor n/2\rfloor)$, the estimate (\ref{sharp}) yields
\begin{eqnarray}
\label{sharpiota}
 \frac{n}{\iota\left(\left\lfloor \frac n2\right\rfloor,\left\lfloor \frac n2\right\rfloor\right)} & \ge &   4\log\left(\frac{n}{\am}\right)-o(1).
\end{eqnarray}
Thus, our pre-factor can not be improved by more than $\log 2$, as claimed in Remark \ref{rk:sharp}. 
\end{proof}

\subsection{Coarsening argument}
\label{sec:colored}
It now remains to prove Lemma \ref{pr:insensitive} and Corollary \ref{co:colored}. Both will rely on the elementary observation, already alluded to in Remark \ref{rk:coarsening}, that  the multislice $\Omega_\kappa$ is a ``coarsened'' version of the ``free'' multislice $\Omega_{(1,\ldots,1)}$, where $(1,\ldots,1)$ denotes the all-one vector of length $n$. To formalize this, let us introduce the projection 
 $\Psi\colon [n]\to[L]$ defined by the relation
\begin{eqnarray*}
\Psi(i) = \ell & \Longleftrightarrow & i\in\left[\kappa_1+\cdots+\kappa_{\ell-1}+1,\kappa_1+\cdots+\kappa_{\ell}\right],
\end{eqnarray*}
and extend this definition to sequences by coordinate-wise application:
\begin{eqnarray*}
\Psi(\omega_1,\ldots,\omega_n) & := & \left(\Psi(\omega_1),\ldots,\Psi(\omega_n)\right).
\end{eqnarray*}
The mapping $\Psi$ ``projects'' the multislice $\Omega_{(1,\ldots,1)}$ onto $\Omega_\kappa$ in the following precise sense.
\begin{lemma}[Coarsening]\label{lm:coarsening}For any observable $f\colon\Omega_\kappa\to\R$, we have
\begin{eqnarray*}
\label{isomorphic}
\EE_\kappa\left[f\right] & = & \EE_{(1,\ldots,1)}\left[f\circ\Psi\right].
\end{eqnarray*}
Moreover, for any $f,g\colon\Omega_\kappa\to\R$ and any weighted graph $G=(G_{ij})_{1\le i,j\le n}$,
\begin{eqnarray*}
\label{coarsening}
\cE_\kappa^G\left(f,g\right) & = & \cE_{(1,\ldots,1)}^G\left(f\circ\Psi,g\circ\Psi\right).
\end{eqnarray*}
\end{lemma}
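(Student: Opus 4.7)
The plan is to establish both identities by exhibiting $\Psi$ as an explicit fiber bundle map from $\Omega_{(1,\ldots,1)}$ onto $\Omega_\kappa$ with constant-size fibers, and then transferring expectations and gradients through this correspondence.

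First, I would verify the combinatorial key fact: for every $\omega\in\Omega_{(1,\ldots,1)}$ (i.e. every permutation of $[n]$), the image $\Psi(\omega)$ lies in $\Omega_\kappa$, since by construction $\Psi^{-1}(\ell)$ has cardinality $\kappa_\ell$ in $[n]$. Moreover, for each target $\omega'\in\Omega_\kappa$, the fiber $\Psi^{-1}(\omega')\subseteq\Omega_{(1,\ldots,1)}$ has exactly $\prod_{\ell=1}^L\kappa_\ell!$ elements: one must bijectively assign, within each color class, the $\kappa_\ell$ labels in $\Psi^{-1}(\ell)\subseteq[n]$ to the $\kappa_\ell$ positions $\{i\colon\omega'_i=\ell\}$. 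Since $|\Omega_{(1,\ldots,1)}|=n!$ and $|\Omega_\kappa|=n!/\prod_\ell\kappa_\ell!$, the fibers all have the same size. The first identity then follows by partitioning the uniform sum over $\Omega_{(1,\ldots,1)}$ along the fibers of $\Psi$.

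For the second identity, the crucial observation is the commutation $\Psi(\omega^{ij})=\Psi(\omega)^{ij}$, which holds coordinate-wise since $\Psi$ is applied independently at each position. This immediately gives
\begin{eqnarray*}
\nabla^{ij}(f\circ\Psi) & = & (\nabla^{ij}f)\circ\Psi,
\end{eqnarray*}
and hence $(\nabla^{ij}(f\circ\Psi))(\nabla^{ij}(g\circ\Psi))=\bigl((\nabla^{ij}f)(\nabla^{ij}g)\bigr)\circ\Psi$. Applying the first identity to the non-negative function $(\nabla^{ij}f)(\nabla^{ij}g)$ (or, more properly, to its positive and negative parts) yields the equality of expectations coordinate by coordinate, and summing against the weights $G_{ij}/2$ over $1\le i<j\le n$ produces the claimed identity of Dirichlet forms.

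There is essentially no obstacle here; the only thing to be careful about is matching the normalizations in the two uniform measures, which is handled automatically once the fibers are shown to have the common size $\prod_\ell\kappa_\ell!$. Accordingly I would present the proof as: (i) the combinatorial fiber count, (ii) the push-forward identity for expectations, (iii) the commutation $\Psi\circ\tau^{ij}=\tau^{ij}\circ\Psi$, and (iv) substitution into the definition (\ref{def:weighted}) of $\cE_\kappa^G$.
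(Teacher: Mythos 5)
Your proposal is correct and follows essentially the same route as the paper: the constant fiber count $\kappa_1!\cdots\kappa_L!$ gives the push-forward of the uniform measure, and the commutation $\nabla^{ij}(f\circ\Psi)=(\nabla^{ij}f)\circ\Psi$ transfers the Dirichlet form. (The aside about positive and negative parts is unnecessary, since the expectation identity holds for arbitrary real-valued observables.)
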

\begin{proof}
By construction, we have $|\Psi^{-1}(\{\ell\})|=\kappa_\ell$ for each color $\ell\in[L]$, and hence $\Psi$ maps $\Omega_{(1,\ldots,1)}$ to $\Omega_\kappa$. The first claim asserts that the $\Psi-$image  of the uniform measure on $\Omega_{(1,\ldots,1)}$ is the uniform measure on $\Omega_\kappa$, which is nothing more than the observation that each element of $\Omega_\kappa$ admits the same number of pre-images under $\Psi$ (namely $\kappa_1!\cdots\kappa_L!$). The second claim follows from the first and the definition (\ref{def:weighted}), once we note that the commutativity relation
\begin{eqnarray*}
\nabla^{ij}(f\circ\Psi)& = & (\nabla^{ij}f)\circ\Psi,
\end{eqnarray*}
trivially holds for all $1\le i<j\le n$ and all $f\colon\Omega_\kappa\to\R$. 
\end{proof}
We can now easily establish our log-Sobolev estimate for the colored exclusion process.
\begin{proof}[Proof of Corollary \ref{co:colored}]We use Lemma \ref{lm:coarsening} and the definitions of $\tls(\kappa)$ and $c(G)$ to write
\begin{eqnarray*}
\ent_\kappa(f) & \le & \tls(\kappa)\cE_\kappa(\sqrt{f},\sqrt{f})\\
& = & \tls(\kappa)\cE_{(1,\ldots,1)}(\sqrt{f}\circ\Psi,\sqrt{f}\circ\Psi)\\
& \le & \tls(\kappa)c(G)\cE_{(1,\ldots,1)}^G(\sqrt{f}\circ\Psi,\sqrt{f}\circ\Psi)\\
& = & \tls(\kappa)c(G)\cE_{\kappa}^G(\sqrt{f},\sqrt{f}).
\end{eqnarray*}
Since $f\colon\Omega_\kappa\to\R_+$ was arbitrary, we have just proved
\begin{eqnarray*}
\tls(\kappa,G) & \le & c(G)\tls(\kappa).
\end{eqnarray*}
The claimed upper bound now follows from our main estimate on $\tls(\kappa)$. The lower bound
\begin{eqnarray*}
\tls(\kappa,G) & \ge & 2\trel(G)
\end{eqnarray*}
is obtained by combining the general inequality $\tls(\cdot)\ge 2\trel(\cdot)$ with Theorem \ref{th:aldous}.  To prove the other lower bound, we choose the test function $f={\bf 1}_A$ in the definition of the log-Sobolev inequality, with $A$ as in (\ref{def:A}). We have already seen that $|A|=|\Omega_\kappa|/{n\choose \am}$. Moreover, we now have $|\partial A|\le |A|d\am$ where $d$ denotes the degree in $G$, since moving from $A$ to $A^c$ requires transposing some site in  $\{1,\ldots,\am\}$ with one of its $d$ neighbors. We thus obtain
\begin{eqnarray*}
\tls(\kappa,G) & \ge & \frac{|A|d\log\frac{|\Omega_\kappa|}{|A|}}{|\partial A|} \ \ge \  \frac{1}{\am}\log {n\choose\am} \ \ge \ \log\left({\frac n\am}\right),
  \end{eqnarray*}  
and the proof is complete.
\end{proof}
\begin{proof}[Proof of Lemma \ref{pr:insensitive}]The statement $\trel(\kappa)=1$ is a (simple) special case of Theorem \ref{th:aldous}. This immediately implies $\tmls(\kappa)\ge \frac 12$, by the general relation (\ref{order}). To prove the more interesting statement $\tmls(\kappa)\le 2$, we take an arbitrary function $f\colon\Omega_\kappa\to\R_+$ and use Lemma \ref{lm:coarsening} to write
\begin{eqnarray*}
\ent_\kappa(f) & = & \ent_{(1,\ldots,1)}(f\circ\Psi)\\
& \le & \tmls(1,\ldots,1)\cE_{(1,\ldots,1)}(f\circ\Psi,\log f\circ\Psi)\\
& = & \tmls(1,\ldots,1)\cE_{\kappa}(f,\log f).
\end{eqnarray*}
This shows that $\tmls(\kappa)\le\tmls(1,\ldots,1)$, and the desired conclusion now follows from  the  classical estimate $\tmls(1,\ldots,1)\le \frac 12$, due to Goel \cite[Corollary 3.1]{MR2094147}.
\end{proof}

\section*{Acknowledgment}The author warmly thanks Jonathan Hermon for his valuable comments on a preliminary version of this work. 
\bibliographystyle{plain}
\bibliography{draft}
\end{document}